\documentclass{amsart}

\usepackage[paperheight=11in, 
    paperwidth=8.5in,
    outer=1.2in,
    inner=1.2in,
    bottom=1.5in,
    top=1.5in]{geometry}

\usepackage{amsmath, amsxtra, amsthm, amsfonts, amssymb}
\usepackage{mathrsfs}
\usepackage{graphicx}
\usepackage{url}
\usepackage{color}
\usepackage{bbm}
\usepackage{tikz-cd}
\usetikzlibrary{matrix,shapes.geometric,calc,backgrounds}

\usepackage[T1]{fontenc}
\linespread{1.2}
\usepackage{enumitem}
\usepackage{pifont}

 \usepackage[hidelinks,backref=page]{hyperref} 
 \hypersetup{
    colorlinks,
    citecolor=magenta,
    filecolor=magenta,
    linkcolor=blue,
    urlcolor=black
}

\usepackage[capitalise]{cleveref}

\usepackage[alphabetic]{amsrefs}
\usepackage{mathrsfs}

\renewcommand{\emptyset}{\varnothing}
\newcommand{\CC}{\mathbb{C}}
\newcommand{\ZZ}{\mathbb{Z}}
\newcommand{\PP}{\mathbb{P}}
\newcommand{\RR}{\mathbb{R}}

\newcommand{\Mbar}{\overline{M}}
\newcommand{\M}{\mathrm{M}}
\newcommand{\be}{\mathbf{e}}

\RequirePackage{xspace}
\RequirePackage{etoolbox}
\RequirePackage{varwidth}
\RequirePackage{enumitem}
\RequirePackage{tensor}
\RequirePackage{mathtools}
\RequirePackage{longtable}
\RequirePackage{multirow}

\theoremstyle{definition}
\newtheorem{theorem}{Theorem}[section]
\newtheorem{corollary}[theorem]{Corollary}
\newtheorem{lemma}[theorem]{Lemma}
\newtheorem{proposition}[theorem]{Proposition}
\newtheorem{definition}[theorem]{Definition}

\newtheorem{remark}[theorem]{Remark}
\newtheorem{observation}[theorem]{Observation}
\newtheorem{maintheorem}{Theorem}

\title{Kapranov degrees}
\author{Joshua Brakensiek, Christopher Eur, Matt Larson, Shiyue Li}

\address{University of California, Berkeley, CA. USA.}
\email{\href{mailto:josh.brakensiek@berkeley.edu}{\nolinkurl{josh.brakensiek@berkeley.edu}}}

\address{Carnegie Mellon University. Pittsburgh, PA. USA.}
\email{\href{mailto:ceur@andrew.cmu.edu}{\nolinkurl{ceur@andrew.cmu.edu}}}

\address{Institute for Advanced Study and Princeton University. Princeton, NJ. USA.}
\email{\href{mailto:mattlarson@princeton.edu}{\nolinkurl{mattlarson@princeton.edu}}}

\address{Institute for Advanced Study.  Princeton, NJ. USA.}
\email{\href{mailto:shiyue_li@ias.edu}{\nolinkurl{shiyue_li@ias.edu}}}

\begin{document}

\maketitle

\begin{abstract}
The moduli space of stable rational curves with marked points has two distinguished families of maps: the forgetful maps, given by forgetting some of the markings, and the Kapranov maps, given by complete linear series of $\psi$-classes.
The collection of all these maps embeds the moduli space into a product of projective spaces.
We call the multidegrees of this embedding ``Kapranov degrees,'' which include as special cases the work of Witten, Silversmith, Gallet--Grasegger--Schicho, Castravet--Tevelev, Postnikov, Cavalieri--Gillespie--Monin, and Gillespie--Griffin--Levinson.
We establish, in terms of a combinatorial matching condition, upper bounds for Kapranov degrees and a characterization of their positivity.  The positivity characterization answers a question of Silversmith and gives a new proof of Laman's theorem characterizing generically rigid graphs in the plane.
We achieve this by proving a recursive formula for Kapranov degrees and by using tools from the theory of error correcting codes.
\end{abstract}

\section{Introduction}

For a finite set $S$ with cardinality $|S|\geq 3$, let $\Mbar_{0,S}$ be the Deligne--Mumford--Knudsen moduli space of stable rational curves with marked points labeled by $S$.  We write $\Mbar_{0,n}$ for $\Mbar_{0,[n]}$, where $[n] \coloneqq \{1, \dots, n\}$ for an integer $n\geq 3$.
For a pair $(S,i)$ consisting of a subset $S\subseteq [n]$ with $|S|\geq 3$ and an element $i\in S$, let
\begin{align*}
f_S & \colon \Mbar_{0,n} \to \Mbar_{0,S} \ \text{ be the forgetful map induced by forgetting the markings not in $S$, and}\\
|\psi_i| & \colon \Mbar_{0,S} \to \PP^{|S|-3} \ \text{ be the \emph{Kapranov map} induced by the complete linear series of the \emph{psi-class} $\psi_i$},
\end{align*}
where $\psi_i$ is the divisor class corresponding to the base-point-free line bundle whose fiber over a stable marked curve $C\in \Mbar_{0,S}$ is the cotangent line at the $i$-th marked point of $C$ \cite{Kapranov}.
See {\cite[Section 2]{GGL3}} for a convenient summary of relevant facts about these maps.

\begin{definition}
For such a pair $(S,i)$, we define $X_{S,i}$ to be the divisor class on $\Mbar_{0,n}$ by $X_{S,i} \coloneqq f_S^*\psi_i$.
\end{definition}

These divisor classes were first considered in \cites{singh,EHKR}, where they were used to give a basis for the cohomology ring of $\Mbar_{0,n}$. 

\begin{definition}
For a collection $(S_1,i_1), \dots, (S_{n-3}, i_{n-3})$ of $n-3$ such pairs, we define its \emph{Kapranov degree} as the intersection number
\[
\int_{\Mbar_{0,n}} X_{S_1, i_1} \dotsb X_{S_{n-3}, i_{n-3}},
\]
where $\int_{\Mbar_{0,n}} \colon H^{\ast}(\Mbar_{0, n}) \to \ZZ$ is the degree map on the cohomology ring of $\Mbar_{0,n}$.
\end{definition}

In other words, Kapranov degrees are exactly the coefficients of the multidegree of the embedding
\[
\Mbar_{0,n} \hookrightarrow \prod_{\substack{(S,i) \text{ such that}\\ i\in S\subseteq [n] \text{ and } |S|\geq 3}} \PP^{|S|-3}
\]
of $\Mbar_{0,n}$ into a product of projective spaces obtained by collecting together all compositions of forgetful maps and Kapranov maps.
Much of the enormous amount of work on intersection theory on $\Mbar_{0,n}$ can be viewed as special cases of Kapranov degrees:
\begin{enumerate}[label = (\roman*)]

\item \label{ref:witten} When $S_j = [n]$ for all $j=1, \dotsc, n-3$, Kapranov degrees in this case are intersection numbers of $\psi$-classes, for which Witten \cite{Witten} showed that $\int_{\Mbar_{0,n}} \psi_1^{a_1} \dotsb \psi_n^{a_n} = \binom{n-3}{a_1, \dotsc, a_{n}}$.
This is the first case of Witten's conjecture on the intersection numbers of $\psi$-classes on $\Mbar_{g,n}$.

\item When $|S_j| = 4$ for all $j$, Kapranov degrees in this case were studied by Silversmith \cite{Silversmith} under the name of \emph{cross-ratio degrees}, because the divisor class $X_{S,i}$ when $|S|=4$ is the pullback of the hyperplane class under the map $\Mbar_{0,n} \to \mathbb{P}^1$ given by taking the cross-ratio of the four points marked by $S$.  That is, the Kapranov degree in this case can be interpreted as the number of ways to choose $n$ marked points on $\mathbb{P}^1$ with $n-3$ prescribed cross-ratios.

\item\label{ref:lamansphere} When further $n = 2v$, and the size 4 subsets $S_j$ are obtained from a graph $G$ with vertex set $[v]$ and $2v - 3$ edges by assigning to each edge $(a,b)$ the subset $\{a,b,a+v,b+v\}$, the cross-ratio degree counts the number of (complex) realizations of $G$ on the $2$-dimensional sphere up to isometries for any fixed generic edge lengths of $G$ \cite{LamanSphere}.

\item When $\{n-1, n\} \subset S_j$ and $i_j \in \{n-1, n\}$ for all $j$, Kapranov degrees are certain mixed volumes of simplices, for which combinatorial formulas were given in special cases in \cite{Pos09} and \cite{EFLS} in terms of matching conditions.  See Remark~\ref{rem:simplices} for a detailed discussion.

\item When $S = [i]$, the divisor class $X_{[i],i}$ is also called an \emph{$\omega$-class}, and the intersection numbers of $\omega$-classes and $\psi$-classes were studied in \cites{CavlieriGillespieMonin,GGL2,GGL3} in terms of combinatorial structures such as parking functions, tournaments, and slide rules.
\item When $i_j = n$ for all $j$, the Kapranov degrees were computed in \cite[Theorem 3.2]{CTHypertrees}. In this case, the Kapranov degree is either $0$ or $1$.

\end{enumerate}

The more general framework that we consider here allows access to additional tools to study these special cases.
Let us begin by noting some immediate properties of Kapranov degrees. 

\begin{proposition}\label{prop:easy}
Kapranov degrees satisfy the following.
\begin{enumerate}
\item (Permutation invariance) $\int_{\Mbar_{0,n}} X_{S_1, i_1} \dotsb X_{S_{n-3}, i_{n-3}} = \int_{\Mbar_{0,n}} X_{\sigma(S_1), \sigma(i_1)} \dotsb X_{\sigma(S_{n-3}), \sigma(i_{n-3})}$ for any permutation $\sigma$ of $[n]$. 

\item (Nonnegativity) $\int_{\Mbar_{0,n}} X_{S_1, i_1} \dotsb X_{S_{n-3}, i_{n-3}} \ge 0$. 

\item (Monotonicity) 
$\int_{\Mbar_{0,n}} X_{S_1, i_1} \dotsb X_{S_{n-3}, i_{n-3}} \le \int_{\Mbar_{0,n}} X_{S_1 \cup T_1, i_1} \dotsb X_{S_{n-3} \cup T_{n-3}, i_{n-3}}$ for any subsets $T_1, \ldots, T_{n-3} \subseteq [n]$. 

\end{enumerate}
\end{proposition}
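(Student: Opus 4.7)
The plan is to handle the three statements separately, using standard properties of $\Mbar_{0,n}$ together with the fact that on a smooth projective variety the intersection of nef divisor classes with an effective cycle is nonnegative.

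For \emph{permutation invariance}, any permutation $\sigma \in \mathfrak{S}_n$ induces an automorphism of $\Mbar_{0,n}$ relabeling the markings, and this automorphism intertwines $f_S$ with $f_{\sigma(S)}$ and $|\psi_i|$ on $\Mbar_{0,S}$ with $|\psi_{\sigma(i)}|$ on $\Mbar_{0,\sigma(S)}$. Its pullback therefore sends $X_{\sigma(S), \sigma(i)}$ to $X_{S, i}$, and the equality of Kapranov degrees is the invariance of the degree map under pullback by an automorphism.

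For \emph{nonnegativity}, each $X_{S,i}$ is the pullback of the hyperplane class under the composition $|\psi_i| \circ f_S \colon \Mbar_{0,n} \to \PP^{|S|-3}$ and is therefore nef. A top intersection of $n-3 = \dim \Mbar_{0,n}$ nef divisor classes on a projective variety is nonnegative: approximate each nef class by ample ones in the real Néron--Severi space and use that intersections of ample classes have positive degree.

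For \emph{monotonicity}, the key step is to show that $X_{S \cup T, i} - X_{S, i}$ is represented by an effective divisor. By induction on $|T|$ it suffices to take $T = \{t\}$, in which case the standard comparison formula $\psi_i = g^*\psi_i + D_{i,t}$ for the forgetful map $g \colon \Mbar_{0, S \cup \{t\}} \to \Mbar_{0, S}$ (with $D_{i,t}$ the effective boundary divisor whose general point has $\{i,t\}$ on one component and the remaining marked points on the other) pulls back along $f_{S \cup \{t\}}$ to give $X_{S \cup \{t\}, i} = X_{S, i} + f_{S \cup \{t\}}^* D_{i,t}$. Writing $X_{S_j \cup T_j, i_j} = X_{S_j, i_j} + E_j$ with each $E_j$ effective and expanding the product yields the original Kapranov degree plus terms of the form $\bigl(\prod_{j \in J} E_j\bigr) \cdot \bigl(\prod_{j \notin J} X_{S_j, i_j}\bigr)$ for nonempty $J \subseteq [n-3]$. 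Each such term is the degree of an effective cycle intersected with nef divisor classes, hence nonnegative by the positivity statement cited above. The only conceptual inputs beyond the comparison formula for $\psi$-classes are the nef-ness of each $X_{S,i}$ and the nef-times-effective positivity; I expect no genuine obstacle beyond bookkeeping.
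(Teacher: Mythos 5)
Your proposal is correct and follows essentially the same route as the paper: permutation invariance via the relabeling automorphism, nonnegativity from the nefness of the $X_{S,i}$ (as pullbacks of base-point-free $\psi$-classes), and monotonicity by writing $X_{S\cup T,i}-X_{S,i}$ as an effective (boundary) divisor class using the Arbarello--Cornalba comparison formula, which is exactly the paper's Lemma~\ref{lem:boundary}. The only difference is that you spell out the nef-approximation and product-expansion bookkeeping that the paper leaves implicit.
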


\begin{proof}
Permutation invariance is clear.  Nonnegativity follows from the fact that the divisor classes $X_{S,i}$ are nef, as they are pullbacks of $\psi$-classes, which are base-point-free.  Monotonicity follows by further noting that $X_{S\cup T,i} - X_{S,i}$ is an effective divisor class (see Lemma~\ref{lem:boundary}).
\end{proof}

Our first result gives a characterization of the positivity of Kapranov degrees.

\begin{maintheorem}\label{thm:cerberus}
Let us say that $S_1, \dotsc, S_{n-3}$ satisfies the \emph{Cerberus condition} if $|\bigcup_{j \in J} S_j | \ge  |J|+3$ for all nonempty $J \subseteq [n-3]$.
The Kapranov degree $\int_{\Mbar_{0,n}} X_{S_1, i_1} \dotsb X_{S_{n-3}, i_{n-3}}$ is positive if and only if $S_1, \dotsc, S_{n-3}$ satisfies the Cerberus condition.
\end{maintheorem}

Theorem~\ref{thm:cerberus} affirmatively answers \cite[Question 4.2]{Silversmith}.
When combined with \cite{LamanSphere}, Theorem~\ref{thm:cerberus} gives a new proof of Laman's theorem \cites{PollaczekGeiringer,Laman} concerning minimally rigid graphs in the plane; see Corollary~\ref{cor:laman}.
When combined with Proposition~\ref{prop:easy}(3) and the observation that $\int_{\Mbar_{0,n}} \psi_n^{n-3} = 1$, Theorem~\ref{thm:cerberus} also recovers \cite[Theorem 3.2]{CTHypertrees}.

The ``only if'' part of Theorem~\ref{thm:cerberus} is straightforward: if the Cerberus condition is violated, then $|\bigcup_{j \in J} S_j | <  |J| + 3$ for some nonempty $J\subseteq [n-3]$, 
and the product $\prod_{j \in J} X_{S_j, i_j}$ is zero because it is pulled back from $\Mbar_{0, S}$ where $S = \bigcup_{j \in J} S_j$, which has dimension strictly less than $|J|$.
We prove the ``if'' part in Section~\ref{sec:non-vanishing} by reducing to the case of cross-ratio degrees, in which case the positivity is equivalent to the algebraic independence of the rational functions corresponding to the cross-ratios.  We then verify the linear independence of the differentials of these rational functions using the GM-MDS theorem~\cites{Lovett,YildizHassibi} from the theory of error correcting codes.

\begin{remark}
After the completion of this paper, we learned that Theorem~\ref{thm:cerberus} was claimed in Theorem 4 of the preprint \cite{BrunoMella}. The proof there relies on an unjustified claim in the last paragraph of the proof of Proposition 4.1 that a certain map has maximal rank. The key case of cross-ratio degrees in Theorem~\ref{thm:cerberus} was proved in \cite[Theorem 3.3]{JordanKaszanitzky} by a similar technique to ours. 
\end{remark}

\medskip
Our second result is a recursive formula for Kapranov degrees, described as follows.
Let $\{\star\}$ be a singleton disjoint from $[n]$.
Given a partition $[n] = P\sqcup Q$ with $|P|\geq 2$ and $|Q| \geq 2$, and a pair $(S, i)$ with $S \subseteq [n]$, $|S|\geq 3$, and $i \in S$, we define an element $\operatorname{res}_P(S,i)$ in the cohomology of $\Mbar_{0,P\cup \star}$ by 
\[
\operatorname{res}_P(S,i) = \begin{cases}
X_{S,i} & \text{if $S\subseteq P$}\\
X_{(S \setminus i)\cup \star, \star} & \text{if $S\cap P = S\setminus i$}\\
X_{(S\cap P)\cup \star, i} & \text{if $S\cap P\supseteq \{i,j\}$ for some $i\neq j \in S$}\\
1 & \text{otherwise, i.e.,\ if $S\cap P = \{i\}$ or $S\cap P \subseteq S\setminus \{i,j\}$ for some $i\neq j \in S$.}
\end{cases}
\]
An illustration of these cases in Figure~\ref{fig:cases} may be helpful for visualizing the above conditions.
We similarly define $\operatorname{res}_Q(S,i)$ by replacing $P$ with $Q$ in the above.
From the definition, it follows that $\operatorname{res}_P(S,i) = 1$ if and only if $\operatorname{res}_Q(S,i) \neq 1$ and vice versa.

\begin{maintheorem}\label{thm:recursion}
Kapranov degrees satisfy the following recursion: for an element $a \in S_1 \setminus i_1$, 
\begin{align*}
\int_{\Mbar_{0,n}} X_{S_1, i_1} \dotsm X_{S_{n-3}, i_{n-3}}  = &\int_{\Mbar_{0,n}} X_{S_1\setminus a, i_1} X_{S_2, i_2} \dotsm X_{S_{n-3}, i_{n-3}}\\
& +\sum_{\substack{[n] = P\sqcup Q\\ \{a, i_1\} \subseteq P\\ S_1 \setminus \{a,i_1\} \subseteq Q}} \left( \int_{\Mbar_{0, P \cup \star}} \prod_{j=2}^{n-3} \operatorname{res}_P(S_j,i_j) \right) \cdot \left( \int_{\Mbar_{0, Q \cup \star}} \prod_{j=2}^{n-3} \operatorname{res}_Q(S_j,i_j)\right).
\end{align*}
\end{maintheorem}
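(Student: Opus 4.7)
The plan is to derive the recursion from the classical comparison identity for $\psi$-classes under the forgetful map, and then to analyze the resulting boundary contributions via the standard gluing embeddings. Let $\pi_a \colon \Mbar_{0, S_1} \to \Mbar_{0, S_1 \setminus a}$ denote the forgetful map omitting the marking $a$; the standard identity $\psi_{i_1}^{(S_1)} = \pi_a^* \psi_{i_1}^{(S_1 \setminus a)} + \delta_{\{a, i_1\}}^{(S_1)}$ gives, after pulling back along $f_{S_1}$,
\[
X_{S_1, i_1} = X_{S_1 \setminus a, i_1} + f_{S_1}^* \delta_{\{a, i_1\}}^{(S_1)}.
\]
A direct pullback computation, using flatness of $f_{S_1}$ and the fact that boundary divisors of $\Mbar_{0, n}$ correspond to partitions of $[n]$ into two blocks of size $\geq 2$, identifies
\[
f_{S_1}^* \delta_{\{a, i_1\}}^{(S_1)} = \sum_{\substack{[n] = P \sqcup Q \\ \{a, i_1\} \subseteq P, \, S_1 \setminus \{a, i_1\} \subseteq Q}} \delta_{P, Q}.
\]
Multiplying by $\prod_{j \geq 2} X_{S_j, i_j}$ and integrating over $\Mbar_{0, n}$ produces the first summand of the recursion and one boundary term for each partition $\{P, Q\}$.

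For each such partition, I would use the closed gluing embedding $\iota_{P, Q} \colon \Mbar_{0, P \cup \star} \times \Mbar_{0, Q \cup \star} \to \Mbar_{0, n}$, together with the projection formula, to rewrite the boundary integral as $\int \iota_{P, Q}^* \prod_{j \geq 2} X_{S_j, i_j}$ on the product. The heart of the proof is then to establish the factorization
\[
\iota_{P, Q}^* X_{S, i} = \pi_1^* \operatorname{res}_P(S, i) \cdot \pi_2^* \operatorname{res}_Q(S, i),
\]
where $\pi_1, \pi_2$ denote the two projections of the product and exactly one of $\operatorname{res}_P(S, i), \operatorname{res}_Q(S, i)$ equals $1$. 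Granting this for each $j \geq 2$, the product $\iota_{P, Q}^* \prod_{j \geq 2} X_{S_j, i_j}$ splits as $\pi_1^* \prod_j \operatorname{res}_P(S_j, i_j) \cdot \pi_2^* \prod_j \operatorname{res}_Q(S_j, i_j)$, and the K\"unneth formula converts the integral on the product into the stated product of integrals.

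To prove the factorization, one analyzes the composition $f_S \circ \iota_{P, Q} \colon \Mbar_{0, P \cup \star} \times \Mbar_{0, Q \cup \star} \to \Mbar_{0, S}$ based on how $S$ intersects $P$ and $Q$ and where $i$ lies. After forgetting markings outside $S$, a two-component curve in the image of $\iota_{P, Q}$ is stabilized by contracting any component with fewer than three special points, and when a component collapses its unique remaining marking migrates to the node. The four cases of the definition of $\operatorname{res}_P$ correspond to (i) $S \subseteq P$, where the $Q$-side contracts completely and the composition factors through the forgetful map $\Mbar_{0, P \cup \star} \to \Mbar_{0, S}$; (ii) $S \cap P = S \setminus i$, where the $Q$-side carries only the marking $i$, collapses, and the node on the $P$-side is relabeled $i$, so that $\psi_i^{(S)}$ is identified with $\psi_\star^{((S \setminus i) \cup \star)}$; (iii) $S \cap P \supseteq \{i, j\}$ for some $j \neq i$, where $i$ remains on the stable $P$-side and the standard restriction formula for $\psi$-classes on boundary strata of $\Mbar_{0, S}$ (which says $\psi_i^{(S)}$ restricted to a boundary component is pulled back from the factor carrying $i$) yields $\pi_1^* X_{(S \cap P) \cup \star, i}^{(P \cup \star)}$; and (iv) the remaining case, where after stabilization $i$ lives on the $Q$-side so the pullback is concentrated on the $Q$-factor and $\operatorname{res}_P(S, i) = 1$. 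The main obstacle is the bookkeeping in this case analysis, particularly checking that case (iii) produces the same formula whether $|S \cap Q| = 1$ (so the $Q$-side also collapses and its sole $S$-marking migrates to the node on the $P$-side) or $|S \cap Q| \geq 2$ (so both components remain stable and $f_S \circ \iota_{P, Q}$ factors through a gluing map inside $\Mbar_{0, S}$). Once this is set up cleanly, each case reduces to the comparison formula for $\psi$-classes under forgetful maps and the standard restriction of $\psi$-classes to boundary strata, so the remaining verifications are mechanical.
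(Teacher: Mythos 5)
Your proposal is correct and follows essentially the same route as the paper: the comparison formula for $\psi$-classes under the forgetful map pulled back along $f_{S_1}$ (the paper's Lemma~\ref{lem:boundary}), the projection formula for the gluing embedding, the case-by-case factorization of $\operatorname{gl}_{P|Q}^* X_{S,i}$ through one of the two projections (the paper's Lemma~\ref{lem:glueing}), and the K\"unneth formula. Your case analysis of how the components stabilize under $f_S$, including the subcase $|S\cap Q|=1$ where the $Q$-side collapses and its marking migrates to the node, is exactly the verification the paper leaves implicit.
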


When $|S_j| = 4$ for all $j \in [n-3]$, i.e., the case of cross-ratio degrees, \cref{thm:recursion} was proven in \cite{LamanSphere}. It was also proven in \cite[Lemma 3.11]{Goldner} using tropical geometry (see \cite{Silversmith}). We prove Theorem~\ref{thm:recursion} in Section~\ref{sec:recursion} by writing $X_{S, i} - X_{S \setminus a, i}$ as a sum of boundary divisors and using that the boundary divisors are products of smaller $\Mbar_{0,n}$'s. 
Theorem~\ref{thm:recursion} gives a useful algorithm for computing Kapranov degrees.\footnote{An implementation in SageMath is available at \url{https://github.com/MattLarson2399/KapranovDegrees}.}
For the case of cross-ratio degrees, the recursion further admits a combinatorial interpretation in terms of trivalent trees (Corollary~\ref{cor:comb}).

\medskip
Using Theorem~\ref{thm:recursion}, we deduce the following upper bound for Kapranov degrees.

\begin{maintheorem}\label{thm:upperbound}
For any fixed size 3 subset $\{p,q,r\}\subseteq [n]$, we define a \emph{matching} for $S_1, \dotsc, S_{n-3}$ to be a bijection $m \colon [n-3] \to [n]\setminus \{p,q,r\}$ such that $m(j) \in S_{j}$ for all $j\in [n-3]$.  We define the \emph{weight} of a matching $m$ to be $w(m) = \prod_{j} (|S_j| - 3)$, where the product is taken over $j \in [n-3]$ such that $m(j) = i_j$. The empty product is set to be $1$.
Then, we have that
\[
\int_{\Mbar_{0,n}} X_{S_1, i_1} \dotsb X_{S_{n-3}, i_{n-3}} \le \sum_{m \text{ a matching}} w(m).
\]
\end{maintheorem}

\medskip
In the case of cross-ratio degrees, the upper bound in Theorem~\ref{thm:upperbound} was proven in \cite{Silversmith} using Gromov--Witten theory.
We prove Theorem~\ref{thm:upperbound} in Section~\ref{sec:upperbound} by comparing the Kapranov degree with the intersection number of the pushforwards of the $X_{S,i}$'s under the map $f \colon \Mbar_{0,n} \to (\mathbb{P}^1)^{n-3}$ given by taking the cross-ratio of the points marked by $\{j, p, q, r\}$ for $j \in [n] \setminus \{p, q, r\}$.
Theorem~\ref{thm:recursion} is used for computing the pushforwards $f_*X_{S,i}$.

\medskip
Lastly, we give an explicit formula for the special case of Kapranov degrees where all the $S_j$ share a subset of size $3$, say $\{n-2, n-1, n\}$, and each $i_j$ lies in $\{n-2, n-1, n\}$.
For a pair $(T,j)$ consisting of $T \subseteq [n-3]$ and $j \in \{n-2, n-1, n\}$, let $Y_{T, j} = X_{T \cup \{n-2, n-1, n\}, j}$.
For $n-3$ such pairs $(T_1, j_1), \dotsc, (T_{n-3}, j_{n-3})$, we define a \emph{$3$-transversal} to be a map $t \colon [n-3] \to \{n-2, n-1, n\}$ such that there is a bijection $m \colon [n-3] \to [n-3]$ satisfying $m(i) \in T_{i}$ and $ t(m(i))= j_{i}$ for all $i \in [n-3]$.

\begin{maintheorem}\label{thm:3transversal}
The Kapranov degree $\int_{\Mbar_{0,n}} Y_{T_1, j_1} \dotsb Y_{T_{n-3}, j_{n-3}}$ is equal to the number of $3$-transversals of $(T_1, j_1), \dotsc, (T_{n-3}, j_{n-3})$. 
\end{maintheorem}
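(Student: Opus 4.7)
The plan is to induct on $\sum_{i=1}^{n-3} |T_i|$. In the base case $\sum |T_i| = 0$, each $Y_{T_i, j_i} = X_{\{n-2, n-1, n\}, j_i}$ vanishes as it is pulled back from $\Mbar_{0, \{n-2, n-1, n\}}$, a single point; correspondingly there is no $3$-transversal, since any valid $m$ would need $m(i) \in T_i = \emptyset$. For the inductive step, by permutation invariance assume $T_1 \neq \emptyset$ and choose $a \in T_1$, so that $a \in S_1 \setminus \{j_1\}$. Applying \cref{thm:recursion} with this $a$ gives
\[
\int_{\Mbar_{0,n}} \prod_{i=1}^{n-3} Y_{T_i, j_i} \;=\; \int_{\Mbar_{0,n}} Y_{T_1 \setminus a, j_1} \prod_{i=2}^{n-3} Y_{T_i, j_i} \;+\; \Sigma,
\]
where $\Sigma$ is the sum over partitions $[n] = P \sqcup Q$ with $\{a,j_1\} \subseteq P$ and $S_1 \setminus \{a,j_1\} \subseteq Q$, parameterized by $C = P \setminus \{a, j_1\} \subseteq [n-3] \setminus T_1$. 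By the inductive hypothesis, the first term counts those $3$-transversals $t$ of the original data that admit a valid bijection $m$ with $m(1) \neq a$, so it suffices to show that $\Sigma$ equals the count of \emph{forced} transversals: those $t$ for which every valid $m$ has $m(1) = a$.

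For each partition term in $\Sigma$, one would unpack the restriction classes. On $\Mbar_{0, Q \cup \star}$, choosing the three distinguished markings to be $\{k_1, k_2, \star\}$ with $\{k_1, k_2\} = \{n-2, n-1, n\} \setminus \{j_1\}$, each nontrivial $\operatorname{res}_Q(S_i, j_i)$ is again of the $3$-transversal form: when $j_i \neq j_1$ it equals $Y^Q_{T_i \cap Q, j_i}$ (via Case~3), and when $j_i = j_1$ with $T_i \subseteq Q$ it equals $Y^Q_{T_i, \star}$ (via Case~2). Since $|Q \cup \{\star\}| < n$, the induction hypothesis interprets $\int_Q$ as the $3$-transversal count for the $Q$-side data. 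On $\Mbar_{0, P \cup \star}$, the nontrivial $\operatorname{res}_P(S_i, j_i)$ occur only when $j_i = j_1$ and $T_i \cap P \neq \emptyset$, and each equals $X_{(T_i \cap P) \cup \{j_1, \star\}, j_1}$; the resulting $P$-side integral thus falls into the two-distinguished-markings Kapranov-degree regime, where a matching-type formula is available from the earlier work of Postnikov and of Eur--Fink--Larson--Spink on mixed volumes of simplices.

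The hard part is the combinatorial identification of $\Sigma$ with the count of forced transversals via a bijection. The intuition is that a forced transversal $t$ must satisfy $t^{-1}(j_1) \cap T_1 = \{a\}$ (otherwise some valid $m$ could use an element of $T_1 \setminus \{a\}$), so $t$ canonically determines $C_t = t^{-1}(j_1) \setminus \{a\} \subseteq [n-3] \setminus T_1$: the $j_1$-branch of the maximally degenerate curve associated with $t$ contains $a$ together with $C_t$ (which together form $P$), while the other two branches lie in $Q$. One would verify that under this correspondence, the $P$- and $Q$-factor counts multiply to give exactly the multiplicity with which $t$ appears at the partition $C_t$, and that no other partition contributes to $t$. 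Reconciling the Hall-type marriage condition governing the original matchings with the product of the two side conditions --- so that ``forced $m(1) = a$'' translates precisely into the $a \in T_i$ versus $a \notin T_i$ dichotomy underlying the $\operatorname{res}$-computations above --- is the technical heart of the proof.
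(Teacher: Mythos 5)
Your setup follows the same skeleton as the paper's argument: apply Theorem~\ref{thm:recursion} at an element $a$ of $T_1$, identify the non-summation term (via induction) with the $3$-transversals admitting a witness $m$ with $m(1)\neq a$, and unpack the boundary terms via $\operatorname{res}_P$ and $\operatorname{res}_Q$ exactly as in the paper's Lemma~\ref{lem:dragonthree}; your case analysis of the restriction classes is correct, and the $P$-side integral is indeed the $0/1$ dragon-marriage degree. But the step you yourself flag as ``the technical heart'' --- showing that $\Sigma$ equals the number of forced transversals --- is not carried out, and the intuition you offer for it is false. A forced transversal need not satisfy $t^{-1}(j_1)\cap T_1=\{a\}$: take $n=6$, $(T_1,j_1)=(\{1,2\},4)$, $(T_2,j_2)=(\{1\},4)$, $(T_3,j_3)=(\{3\},4)$, and $a=2$. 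The unique $3$-transversal is $t\equiv 4$, with unique witness $m(1)=2$, $m(2)=1$, $m(3)=3$, so $t$ is forced; yet $t^{-1}(j_1)\cap T_1=\{1,2\}$, and your recipe $C_t=t^{-1}(j_1)\setminus\{a\}=\{1,3\}$ produces a set $P=\{a,j_1\}\cup C_t$ containing $1\in T_1\setminus\{a\}$, which is not even an admissible partition in the recursion (the actual contributing partition here is $P=\{2,4\}$). The point is that $t^{-1}(j_1)$ also contains $m(i')$ for every other index $i'$ with $j_{i'}=j_1$, so $t$ alone does not determine $P$ in the way you describe; the correct relationship involves which sets $T_i$ with $j_i=j_1$ meet $P$, and pinning this down is genuinely delicate.

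The paper sidesteps the need for a bijection entirely, and this is the idea your proposal is missing. Rather than inducting upward from $T_i=\emptyset$, it inducts downward from $T_i=[n-3]$ for all $i$, where both sides equal $\binom{n-3}{d_{n-2},d_{n-1},d_n}$ by Witten's formula. With that anchor it suffices to prove the \emph{inequality} $\Sigma\le|\{\text{new transversals}\}|$ at each increment, since any strict inequality would propagate upward and contradict equality at the top. The inequality only requires an \emph{injection} from pairs consisting of an admissible partition and a $3$-transversal of $\mathcal T|_Q$ into new transversals, which the paper constructs by extending $t$ by the constant value $n-2$ on $P\setminus(n-2)$; even there, verifying that the image consists of new transversals and that distinct partitions give distinct images takes a careful argument using the dragon marriage condition on the $P$-side. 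In your bottom-up induction an inequality is not enough --- you need exact equality at every step --- so without either a correct bijection or the top anchor, the proof does not close.
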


When further $j_i \in \{n-1, n\}$ for all $i\in [n-3]$, this follows from \cite[Theorem B]{EFLS}; see Remark~\ref{rem:simplices}.
We prove Theorem~\ref{thm:3transversal} by repeatedly applying Theorem~\ref{thm:recursion}.

\smallskip
Given the utility of Theorem~\ref{thm:recursion} for proving Theorems~\ref{thm:upperbound} and \ref{thm:3transversal}, one may attempt to deduce the positivity criterion for Kapranov degrees (Theorem~\ref{thm:cerberus}) from the recursion (Theorem~\ref{thm:recursion}).
We do not know how to do so; see Remark~\ref{rem:comb} for a discussion.

\subsection*{Open questions}
We conclude with some open problems:
\begin{enumerate}
\item When are Kapranov degrees $1$?
We don't have a conjecture even in the case of cross-ratio degrees. A special case of this problem arose in \cite{LamCR}.

\item It would be interesting to study the $K$-theoretic analogue of Kapranov degrees. A generalization to $K$-theory of the result in \cite{Witten} was given in \cites{Pandharipande,Lee}, and a generalization of \cite[Theorem 3.2]{CTHypertrees} was given in \cite[Section 9]{LLPP}.

\item To what extent can these results be generalized to positive genus? If $g = 1$, the analogue of Theorem~\ref{thm:cerberus} does not hold: $\int_{\Mbar_{1,2}} f_1^* \psi_1 f_2^* \psi_2 = 0$.
\end{enumerate}

\subsection*{A note on ground field characteristic}
We assume the ground field has characteristic zero in the proof of Theorem~\ref{thm:upperbound} when making Observation~\ref{obs:divisors}, but our results are valid for $\Mbar_{0,n}$ in arbitrary characteristic because the Chow ring of $\Mbar_{0,n}$ is independent of the ground field. See also Remark~\ref{rem:char}.

\subsection*{Acknowledgements}
We thank Renzo Cavalieri, June Huh, Noah Kravitz, Massimiliano Mella, Rob Silversmith, Hunter Spink, and Ravi Vakil for helpful conversations.
We also thank the referees for helpful suggestions.
The first author was supported by a Microsoft Research PhD Fellowship.
The second author is supported by US National Science Foundation (DMS-2001854 and DMS-2246518).
The third author is supported by an NDSEG fellowship.
The fourth author was supported by a Coline M. Makepeace Fellowship at Brown University Graduate School and NSF DMS-2053221.

\section{Positivity}\label{sec:non-vanishing}
In this section, we prove Theorem~\ref{thm:cerberus}.
Recall that a collection of subsets $S_1, \dotsc, S_{n-3}$ of $[n]$ satisfies the \emph{Cerberus condition} if
\begin{equation}\label{eq:Cerberus}\tag{Cerberus}
\bigg|\bigcup_{j \in J} S_j \bigg| \ge |J| + 3 \quad\text{for all nonempty $J \subseteq [n]$.}
\end{equation}
We noted in the introduction that the Kapranov degree is zero if the Cerberus condition is not satisfied.
For the converse, i.e., the ``if'' part of Theorem~\ref{thm:cerberus}, we reduce to the case of cross-ratio degrees by combining Proposition~\ref{prop:easy}(3) with a combinatorial argument (Proposition~\ref{prop:hallreduce}).
We then analyze the cross-ratio degree case using the GM-MDS theorem (Theorem~\ref{thm:gm-mds}) from the theory of error correcting codes.  We conclude with an application to graph rigidity (Corollary~\ref{cor:laman}).

\subsection{Reduction to the case of cross-ratio degrees}\label{ssec:hall}

\begin{proposition}\label{prop:hallreduce}
Let $S_1, \dotsc, S_{n-3} \subset [n-3]$, and choose $i_j \in S_j$. If $S_1, \dotsc, S_{n-3}$ satisfies the Cerberus condition, then we can find $S_j' \subset S_j$ with $i_j \in S_j'$ and $|S_j'| = 4$ such that $S_1', \dotsc, S_{n-3}'$ satisfies the Cerberus condition. 
\end{proposition}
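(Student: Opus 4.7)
The plan is to induct on $\sum_{j=1}^{n-3}|S_j|$, with base case when $|S_j|=4$ for all $j$, in which case we take $S_j'=S_j$. For the inductive step, it suffices to prove the following claim: if $(S_1,\ldots,S_{n-3})$ satisfies the Cerberus condition and some $|S_j|\geq 5$, say $|S_1|\geq 5$, then there is an element $a\in S_1\setminus\{i_1\}$ such that $(S_1\setminus\{a\},S_2,\ldots,S_{n-3})$ also satisfies the Cerberus condition.

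Removing $a\in S_1$ does not affect $\bigcup_{j\in J}S_j$ when $1\notin J$, so we only need to analyze subsets $J$ with $1 \in J$. Call such a $J$ \emph{tight} if $|\bigcup_{j\in J}S_j|=|J|+3$, and set $B_J=S_1\setminus\bigcup_{j\in J\setminus\{1\}}S_j$; the Cerberus condition for $(S_1\setminus\{a\},S_2,\ldots,S_{n-3})$ fails at $J$ precisely when $J$ is tight and $a\in B_J$. Let $\mathcal{T}$ denote the collection of tight $J\ni 1$ with $|J|\geq 2$; the singleton $\{1\}$ is excluded because its tightness would force $|S_1|=4$, contradicting $|S_1|\geq 5$. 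For each $J\in\mathcal{T}$, the Cerberus condition applied to $J\setminus\{1\}$ gives $|\bigcup_{j\in J\setminus\{1\}}S_j|\geq|J|+2$, so $|B_J|\leq(|J|+3)-(|J|+2)=1$.

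The main input is an uncrossing argument. The function $J\mapsto|\bigcup_{j\in J}S_j|$ is submodular, so whenever $J_1,J_2\in\mathcal{T}$, adding the submodular inequality to the Cerberus inequalities for $J_1\cap J_2$ and $J_1\cup J_2$ forces equality throughout, showing that both $J_1\cap J_2$ and $J_1\cup J_2$ are tight. Since $1\in J_1\cap J_2$ and $|J_1\cap J_2|=1$ would again contradict $|S_1|\geq 5$, we conclude $J_1\cap J_2\in\mathcal{T}$, so $\mathcal{T}$ has a unique minimum $J_{\min}$. For every $J\in\mathcal{T}$ we have $J\supseteq J_{\min}$, hence $B_J\subseteq B_{J_{\min}}$, and therefore $\bigcup_{J\in\mathcal{T}}B_J=B_{J_{\min}}$ has size at most $1$. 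The set of ``safe'' choices $S_1\setminus(\{i_1\}\cup B_{J_{\min}})$ thus has cardinality at least $|S_1|-2\geq 3$, and removing any one of them produces the desired collection.

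The bulk of the work lies in the uncrossing argument identifying the minimum tight set; everything else amounts to tracking how a single removal can break the Cerberus condition and bounding the resulting failure locus. I expect the main subtlety to be the repeated need to rule out the collapse $J_1\cap J_2=\{1\}$, which is precisely where the assumption $|S_1|\geq 5$ is used, and which is what makes this greedy reduction work at all.
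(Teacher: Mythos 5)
Your proof is correct, but it takes a genuinely different route from the paper. The paper reduces the statement to the Hall--Rado theorem applied to the third Dilworth truncation $D_3(U_{n,n})$ of the Boolean matroid: the sets $T_{S_j,i_j}=\{S'\in\binom{[n]}{4}: i_j\in S'\subseteq S_j\}$ have closures equal to the connected flats $F_{S_j}$ (using Athanasiadis's description of the flats of $D_3(U_{n,n})$), and the Hall--Rado rank inequality is verified by a minimal-counterexample argument exploiting the fact that ranks add over connected components of a join of flats. Your argument instead deletes one element at a time from a set of size at least $5$ and shows directly that some deletion preserves the Cerberus condition: you correctly identify that a deletion of $a$ from $S_1$ can only break the inequality at a \emph{tight} set $J\ni 1$ with $a\in B_J=S_1\setminus\bigcup_{j\in J\setminus\{1\}}S_j$, you correctly bound $|B_J|\le 1$ via the Cerberus inequality for $J\setminus\{1\}$, and your uncrossing step is sound (the coverage function $J\mapsto|\bigcup_{j\in J}S_j|$ is submodular, tightness of $J_1,J_2$ forces tightness of $J_1\cap J_2$ and $J_1\cup J_2$, and $J_1\cap J_2=\{1\}$ is excluded because $\{1\}$ tight would force $|S_1|=4$). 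The existence of the minimum tight set $J_{\min}$ then confines all bad elements to the at-most-two-element set $\{i_1\}\cup B_{J_{\min}}$, leaving at least three safe deletions. What each approach buys: the paper's proof is short modulo two imported results (Hall--Rado and the computation of the flats of the Dilworth truncation) and situates the proposition within the ``generalized Hall's marriage'' circle of ideas; your proof is entirely self-contained, using nothing beyond submodularity of coverage functions, and as a bonus shows that the reduction can always be carried out greedily, one element at a time.
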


Proposition~\ref{prop:hallreduce} is closely related to the ``generalized Hall's marriage theorem'' results in \cites{DauSongYuen,BGM}.
We prove Proposition~\ref{prop:hallreduce} using matroid theory. See \cite{Oxley} for undefined matroid terminology.

\smallskip
We begin by recalling the Hall--Rado theorem, as stated in \cite[Theorem 11.2.2]{Oxley}.

\begin{proposition}\label{prop:hallrado}
Let $\M$ be a matroid with rank function $\operatorname{rk}_\M$, and let $A_1, \dotsc, A_\ell$ be subsets of the ground set of $\M$. Then, there is an independent set $\{a_1, \dotsc, a_\ell\}$ of $\M$ satisfying $a_j \in A_j$ for all $j\in [\ell]$ if and only if
\begin{equation}\label{eq:HR}\tag{Hall--Rado}
\operatorname{rk}_\M\left(  \textstyle \bigcup_{j \in J} A_j \right) \displaystyle \ge |J| \quad\text{for all $J\subseteq [\ell]$}.
\end{equation}
In particular, such a subset $\{a_1, \dotsc, a_\ell\}$ exists if and only if it exists after replacing each $A_j$ with its closure in $\M$.
\end{proposition}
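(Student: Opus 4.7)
The forward direction is a one-liner: given an independent transversal $\{a_1,\dotsc,a_\ell\}$ with $a_j\in A_j$, the subset $\{a_j:j\in J\}$ is an independent subset of $\bigcup_{j\in J}A_j$ of size $|J|$ for every $J\subseteq[\ell]$, which verifies \eqref{eq:HR}. For the converse, I would induct on $\ell$, with base case $\ell=1$ being immediate since $\operatorname{rk}_\M(A_1)\ge 1$ forces $A_1$ to contain a non-loop.

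For the inductive step, the plan is to split on whether \eqref{eq:HR} is strict for every proper nonempty $J\subsetneq[\ell]$. In the ``strict'' case, I would pick any non-loop $a_\ell\in A_\ell$ (available because $\operatorname{rk}_\M(A_\ell)\ge 1$) and pass to the contraction $\M/a_\ell$: for any $J\subseteq[\ell-1]$,
\[
\operatorname{rk}_{\M/a_\ell}\!\Bigl(\bigcup_{j\in J}A_j\Bigr)\ \ge\ \operatorname{rk}_{\M}\!\Bigl(\bigcup_{j\in J}A_j\Bigr)-1\ \ge\ |J|,
\]
so the inductive hypothesis applies to $(A_j)_{j<\ell}$ in $\M/a_\ell$ and produces the remaining transversal elements. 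In the ``tight'' case, some proper nonempty $K\subsetneq[\ell]$ satisfies $\operatorname{rk}_\M(\bigcup_{j\in K}A_j)=|K|$. I would first apply the induction hypothesis to $(A_j)_{j\in K}$ to extract an independent transversal $B=\{a_j\}_{j\in K}$, and then observe that $|B|=|K|=\operatorname{rk}_\M(\bigcup_{j\in K}A_j)$ forces $B$ to be a basis of $\bigcup_{j\in K}A_j$, so $\operatorname{cl}_\M(B)\supseteq\bigcup_{j\in K}A_j$. This containment allows one to compute, for $J'\subseteq[\ell]\setminus K$,
\[
\operatorname{rk}_{\M/B}\!\Bigl(\bigcup_{j\in J'}A_j\Bigr)\ =\ \operatorname{rk}_\M\!\Bigl(\bigcup_{j\in K\cup J'}A_j\Bigr)-|K|\ \ge\ |J'|,
\]
so \eqref{eq:HR} transfers to $\M/B$ on the smaller family $(A_j)_{j\notin K}$, and a second application of induction completes the transversal.

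The main technical point requiring care is the rank identity in the tight case: absorbing $B$ into the union needs $B\subseteq\bigcup_{j\in K}A_j\subseteq\operatorname{cl}_\M(B)$ in order to conclude $\operatorname{rk}_\M(B\cup\bigcup_{j\in J'}A_j)=\operatorname{rk}_\M(\bigcup_{j\in K\cup J'}A_j)$; without the basis observation this step would fail and the induction would not close. Finally, the ``in particular'' clause is immediate from the main equivalence, since $\operatorname{rk}_\M(X)=\operatorname{rk}_\M(\operatorname{cl}_\M(X))$ shows that \eqref{eq:HR} is equivalent for $(A_j)$ and for $(\operatorname{cl}_\M(A_j))$, while any transversal of $(A_j)$ is trivially a transversal of $(\operatorname{cl}_\M(A_j))$.
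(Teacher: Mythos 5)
Your proposal is correct. Note, however, that the paper does not prove Proposition~\ref{prop:hallrado} at all: it is quoted as Theorem~11.2.2 of Oxley's book, so there is no in-paper argument to compare against. What you have written is the standard textbook proof of Rado's theorem, and it is carried out correctly: the case split on whether \eqref{eq:HR} is everywhere strict on proper nonempty $J$, the contraction of a single non-loop in the strict case, and the contraction of the partial transversal $B$ in the tight case all work as stated. You correctly isolate the one point that needs care, namely that tightness forces $B$ to be a basis of $\bigcup_{j\in K}A_j$, so that $\bigcup_{j\in K}A_j\subseteq\operatorname{cl}_\M(B)$ and hence $\operatorname{rk}_\M\bigl(B\cup\bigcup_{j\in J'}A_j\bigr)=\operatorname{rk}_\M\bigl(\bigcup_{j\in K\cup J'}A_j\bigr)$. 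Two small points you leave implicit but which cause no trouble: when passing to $\M/a_\ell$ or $\M/B$ you should replace each $A_j$ by $A_j\setminus\{a_\ell\}$ (resp.\ $A_j\setminus B$) since the contraction has a smaller ground set, and the identity $\operatorname{rk}_{\M/B}(X\setminus B)=\operatorname{rk}_\M(X\cup B)-\operatorname{rk}_\M(B)$ gives exactly the same bound; and the forward direction uses that the representatives $a_1,\dotsc,a_\ell$ are distinct, which is implicit in reading $\{a_1,\dotsc,a_\ell\}$ as a set of cardinality $\ell$.
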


We will apply the Hall--Rado theorem to the matroid known as the \emph{third Dilworth truncation} of the boolean matroid, denoted $D_3(U_{n,n})$.  See, for example, \cite{Whittle}.
Its ground set is the collection $\binom{[n]}{4}$ of all subsets of $[n]$ of size $4$, and its set of bases consists of collections of $n-3$ subsets of size $4$ that satisfy the Cerberus condition.
In \cite[Theorem 3.2]{Athanasiadis}, the flats of $D_3(U_{n,n})$ are calculated to be as follows. They are in bijection with anti-chains $\{T_1, \dotsc, T_k\}$ in the poset of subsets of $[n]$ that satisfy the conditions
$$|T_i| \ge 4 \quad\text{and}\quad |  \textstyle \bigcup_{i \in I}\displaystyle T_i| > 3 + \sum_{i \in I} (|T_i| - 3) \quad\text{for all $I \subseteq [k] $ with $|I| \ge 2$}.$$
The flat of $D_3(U_{n,n})$ corresponding to $\{T_1, \dotsc, T_k\}$ is the set
\[
F_{T_1,\dotsc, T_k} = \left\{S \in  \textstyle \binom{[n]}{4} : \text{$S\subseteq T_i$ for some $i\in [k]$}\right\},
\]
and its rank in $D_3(U_{n,n})$ is $\sum_{i=1}^{k} (|T_i| - 3)$.
The flat $F_{T_1, \dotsc, T_k}$ is connected if and only if $k = 1$.
The poset relation on the lattice of flats of $D_3(U_{n,n})$ is by refinement: $F_{T_1, \dotsc, T_k} \le F_{T'_1, \dotsc, T_{k'}'}$ if each $T_i$ is contained in some $T_{i'}'$.

\smallskip

\begin{proof}[Proof of Proposition~\ref{prop:hallreduce}]
For a pair $(S, i)$, let $T_{S, i}$ be the set $\{S' \in \binom{[n]}{4} : i\in S' \subseteq S\}$.
We claim that because $S_1, \dotsc, S_{n-3}$ satisfies Cerberus, the collection $T_{S_1, i_1}, \dotsc, T_{S_{n-3}, i_{n-3}}$ of subsets of $\binom{[n]}{4}$ satisfies \eqref{eq:HR} with $\M = D_3(U_{n,n})$.  The result then follows by applying Proposition~\ref{prop:hallrado} with $A_j = T_{S_j,i_j}$.

For the claim, we first note that the closure of $T_{S, i}$ in $D_3(U_{n,n})$ is the flat $F_S$ corresponding to $\{S \}$ because $F_S$ contains $T_{S,i}$ and they both have rank $|S| - 3$.
We now need show that the collection of flats $F_{S_1}, \dotsc, F_{S_{n-3}}$ of $D_3(U_{n,n})$ satisfies \eqref{eq:HR}.
Suppose on the contrary that it fails \eqref{eq:HR}, and take a minimal $J\subseteq [n-3]$ witnessing the failure.
For each $j\in J$, because the flat $F_{S_j}$ is connected, it is contained in a connected component of the join $\bigvee_{j\in J} F_{S_j}$.
Since ranks add over connected components, the minimality of $J$ implies that $\bigvee_{j\in J} F_{S_j}$ is a connected flat of rank less than $|J|$ in $D_3(U_{n,n})$.
But the smallest connected flat containing $F_{S_j}$ for all $j\in J$ is the flat $F_{\bigcup_{j\in J} S_j}$, which has rank $|\bigcup_{j\in J} S_j| - 3 \geq |J|$ because of the Cerberus condition. 
\end{proof}

\subsection{Positivity for cross-ratio degrees}\label{ssec:crossratio}

\begin{proof}[Proof of Theorem~\ref{thm:cerberus}]
By Proposition~\ref{prop:hallreduce} and the monotonicity property Proposition~\ref{prop:easy}(3), it suffices to show the theorem in the case of cross-ratio degrees, which is Proposition~\ref{prop:crossratiodegcase} established below.
\end{proof}

\begin{proposition}\label{prop:crossratiodegcase}
Let $S_1, \dotsc, S_{n-3} \subseteq [n]$, each with $|S_j| = 4$, satisfying the Cerberus condition.  Then, we have
$$\int_{\Mbar_{0,n}} X_{S_1} \dotsb X_{S_{n-3}} > 0.$$
\end{proposition}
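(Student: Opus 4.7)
The plan is to verify algebraic independence of the $n-3$ cross-ratio functions on $M_{0,n}$ by encoding linear independence of their differentials as a question about bases of a Reed--Solomon code with prescribed zero patterns, and then invoking the GM-MDS theorem.  By Observation~\ref{obs:divisors}, positivity of the cross-ratio degree is equivalent to the cross-ratio map $F = (f_1, \ldots, f_{n-3}) \colon M_{0,n} \to (\mathbb{P}^1)^{n-3}$, where $f_j = \operatorname{CR}(p_{S_j})$, being dominant; in characteristic zero this is equivalent to the differentials $df_1, \ldots, df_{n-3}$ being linearly independent in the cotangent space at some point of $M_{0,n}$.

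To set this up, I would realize $M_{0,n}$ as the $\operatorname{PGL}_2$-quotient of $\operatorname{Conf}_n(\mathbb{A}^1)$ with affine coordinates $(p_1, \ldots, p_n)$.  The three infinitesimal generators $\partial_{p_i},\ p_i \partial_{p_i},\ p_i^2 \partial_{p_i}$ of the $\operatorname{PGL}_2$-action identify the cotangent space $T^*_{(p_i)} M_{0,n}$ with
\[
V_p := \Bigl\{(v_1, \ldots, v_n) \in \mathbb{C}^n : \sum_i v_i = \sum_i v_i p_i = \sum_i v_i p_i^2 = 0 \Bigr\},
\]
which is the dual of the Reed--Solomon code $\operatorname{RS}_3(p_1, \ldots, p_n)$ and has dimension $n-3$.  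Since $f_j$ is $\operatorname{PGL}_2$-invariant and depends only on $(p_i)_{i \in S_j}$, its differential $df_j$ lies in $V_p \cap \mathbb{C}^{S_j}$, and this intersection is $1$-dimensional because the $3 \times 4$ Vandermonde matrix on the points $\{p_i\}_{i \in S_j}$ has full rank whenever the four points are distinct.  Thus $df_j$ is a nonzero generator of $V_p \cap \mathbb{C}^{S_j}$.

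Setting $Z_j := [n] \setminus S_j$, each of size $n-4$, linear independence of $\{df_j\}_{j=1}^{n-3}$ is equivalent to the existence of a basis $v_1, \ldots, v_{n-3}$ of $V_p$ with $v_j$ vanishing on $Z_j$.  The GM-MDS theorem of Lovett and Yildiz--Hassibi guarantees such a basis exists for some choice of distinct $p_1, \ldots, p_n$ provided
\[
\Bigl|\bigcap_{j \in T} Z_j\Bigr| \le (n-3) - |T| \qquad \text{for every nonempty } T \subseteq [n-3].
\]
Using the identity $|\bigcap_{j \in T} Z_j| = n - |\bigcup_{j \in T} S_j|$, this inequality is exactly the Cerberus condition $|\bigcup_{j \in T} S_j| \ge |T|+3$, which is our hypothesis.

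The main step is the bridge between the geometric setup (cross-ratio differentials on $M_{0,n}$) and the coding-theoretic setup (bases of a Reed--Solomon code with prescribed zero patterns).  The key observation enabling it is that $|Z_j| = \dim V_p - 1$, which both forces each constraint subspace $V_p \cap \mathbb{C}^{S_j}$ to be exactly $1$-dimensional and spanned by $df_j$, and places us in the setting where GM-MDS directly applies; once this identification is made, the combinatorial translation between the GM-MDS condition and the Cerberus condition is tautological.
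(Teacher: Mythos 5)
Your proof is correct and reaches the paper's conclusion through the same two landmarks --- reduce positivity to linear independence of the cross-ratio differentials, then feed the Cerberus condition into the GM-MDS theorem (Theorem~\ref{thm:gm-mds}) --- but the bridge between the two is built differently. The paper's route (Proposition~\ref{prop:linearindependence}) computes $d\operatorname{cr}_{S}$ explicitly, assembles the $(n-3)\times n$ matrix of differentials inside $\Omega_{K(x_1,\dots,x_n)/K}$, and then uses the \emph{extra} structure of the GM-MDS transition matrix from Proposition~\ref{prop:gm-mds-extra} to match $GM$, after row and column rescaling, entry-by-entry with that matrix. Your route is coordinate-free: identifying $T^*M_{0,n}$ at a configuration $p$ with the $(n-3)$-dimensional dual Reed--Solomon code $V_p$, you observe that $V_p\cap\mathbb{C}^{S_j}$ is exactly one-dimensional (a $3\times 4$ Vandermonde rank count) and is spanned by $df_j$, so that \emph{any} basis of $V_p$ with support pattern $S_1,\dots,S_{n-3}$ must consist of nonzero multiples of the $df_j$; the bare existence statement of GM-MDS then finishes the argument, with no explicit computation of the differentials and no need for Proposition~\ref{prop:gm-mds-extra}. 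This is a genuine simplification. Two small points you should make explicit: (i) you need $df_j\neq 0$ for $df_j$ to span $V_p\cap\mathbb{C}^{S_j}$ --- immediate since the cross-ratio of four points is nonconstant in each argument, but it is the load-bearing hypothesis in ``$v_j$ is a nonzero multiple of $df_j$''; and (ii) Theorem~\ref{thm:gm-mds} as stated concerns the primal code with generator matrix $M_{ij}=x_j^{i-1}$ of dimension $n-3$, whereas your $V_p$ is the dual of $\mathrm{RS}_3$; these differ by a diagonal rescaling of coordinates (the dual of a generalized Reed--Solomon code is again one, with the same evaluation points), which does not affect support patterns, so the transfer is harmless but should be stated.
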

Recall that the cross-ratio of points $x_1, x_2, x_3, x_4$ in $\mathbb{P}^1$ is 
\[
\operatorname{cr}_{\{1,2,3,4\}} = \frac{(x_1 - x_3)(x_2-x_4)}{(x_1-x_4)(x_2-x_3)}.
\] 
Note that $\int_{\Mbar_{0,n}} X_{S_1} \dotsb X_{S_{n-3}}$ is the degree of the map $\Mbar_{0,n} \to (\mathbb{P}^1)^{n-3}$ induced by taking the cross-ratio of the points marked by $S_1, \dotsc, S_{n-3}$, which is positive exactly when the map is dominant.
In other words, denoting by $\operatorname{cr}_{S_i} \in K(\Mbar_{0,n})$ the rational function corresponding to the cross-ratio of the points marked by $S_i$, we have 
$$\int_{\Mbar_{0,n}} X_{S_1} \dotsb X_{S_{n-3}} > 0 \quad\text{if and only if}\quad \operatorname{cr}_{S_1}, \dotsc, \operatorname{cr}_{S_{n-3}} \text{ are algebraically independent}.$$
In order to check this, it suffices to show that
$$d\operatorname{cr}_{S_1}, \dotsc, d\operatorname{cr}_{S_{n-3}} \text{ are linearly independent in } \Omega_{K(\Mbar_{0,n})/K},$$
where $\Omega_{K(\Mbar_{0,n})/K}$ is the module of differentials of the function field $K(\Mbar_{0,n})$ over the ground field $K$. The quotient map $(\mathbb{P}^1)^{n} \dashrightarrow \Mbar_{0,n}$ induces an injection 
$$\Omega_{K(\Mbar_{0,n})/K} \otimes_K K(x_1, \dotsc, x_n) \to \Omega_{K(x_1, \dotsc, x_n)/K} = \bigoplus dx_i \cdot K(x_1, \dotsc, x_n).$$
In order to prove Proposition~\ref{prop:crossratiodegcase}, it therefore suffices to prove the following result.
\begin{proposition}\label{prop:linearindependence}
Let $S_1, \dotsc, S_{n-3} \subset [n]$ with each $S_j$ of size $4$. If $S_1, \dotsc, S_{n-3}$ satisfies the Cerberus condition, then $d\operatorname{cr}_{S_1}, \dotsc, d\operatorname{cr}_{S_{n-3}}$ are linearly independent in $\Omega_{K(x_1, \dotsc, x_n)/K}$. 
\end{proposition}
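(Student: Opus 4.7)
The plan is to reduce the linear independence of $d\operatorname{cr}_{S_1}, \ldots, d\operatorname{cr}_{S_{n-3}}$ to the linear independence of a family of polynomials, which is then verified using the GM-MDS theorem.

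I would first compute the coefficients of $d\operatorname{cr}_{S_j}$ in the basis $dx_1, \ldots, dx_n$.  Since $\operatorname{cr}_{S_j}$ depends only on $\{x_k : k \in S_j\}$, the coefficient of $dx_i$ vanishes for $i \notin S_j$.  The $\operatorname{PGL}_2$-invariance of $\operatorname{cr}_{S_j}$ forces the coefficient vector to be annihilated by $(1, \ldots, 1)$, $(x_1, \ldots, x_n)$, and $(x_1^2, \ldots, x_n^2)$.  Restricted to the coordinates in $S_j$, this is a $3 \times 4$ Vandermonde condition with a $1$-dimensional kernel spanned by the residue vector $v_{j,i} := 1/P_j'(x_i) = \prod_{k \in S_j,\, k \neq i}(x_i - x_k)^{-1}$ (with $v_{j,i} = 0$ for $i \notin S_j$), where $P_j(y) := \prod_{k \in S_j}(y - x_k)$.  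Since $d\operatorname{cr}_{S_j}$ is a nonzero scalar multiple of $v_j$, the claim reduces to showing that $v_1, \ldots, v_{n-3}$ are linearly independent.  Setting $D(y) := \prod_{k=1}^n(y - x_k)$ and $Q_j(y) := D(y)/P_j(y) = \prod_{k \in [n]\setminus S_j}(y - x_k)$, the partial-fraction identity $1/P_j(y) = \sum_{i \in S_j} v_{j,i}/(y - x_i)$ shows that $\sum_j c_j v_j = 0$ is equivalent to $\sum_j c_j Q_j(y) = 0$ as polynomials in $y$.  Thus the problem reduces to showing the $n-3$ polynomials $Q_1, \ldots, Q_{n-3}$ of degree $n-4$ are linearly independent over $K(x_1, \ldots, x_n)$.

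To verify this, pick any three-element $\{p,q,r\} \subseteq [n]$ and form the $(n-3) \times (n-3)$ matrix $M$ with entries $M_{\ell, j} = Q_j(x_\ell)$ for $\ell \in [n]\setminus\{p,q,r\}$ and $j \in [n-3]$.  Since a polynomial of degree $\leq n-4$ is determined by its values at $n-3$ distinct points, the $Q_j$'s are linearly independent if and only if $\det M \neq 0$.  The entry $M_{\ell,j} = \prod_{k \in [n]\setminus S_j}(x_\ell - x_k)$ vanishes precisely when $\ell \notin S_j$, and a direct translation shows that the Cerberus condition $|\bigcup_{j \in J} S_j| \geq |J|+3$ is equivalent to the GM-MDS hypothesis on this zero pattern.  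Applying the GM-MDS theorem then produces a specialization of the $x_k$'s to distinct elements of a sufficiently large extension of $K$ under which $\det M \neq 0$, and since $\det M$ is a polynomial in $x_1, \ldots, x_n$, it is a nonzero element of $K(x_1, \ldots, x_n)$, completing the proof.

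The main obstacle is invoking GM-MDS in the form needed.  The classical statement produces existence of MDS matrices with prescribed zero pattern over sufficiently large fields, not directly the non-vanishing of our specific polynomial determinant $\det M$.  The key is that the proofs of GM-MDS (by Lovett and by Yildiz--Hassibi) build these MDS matrices by evaluating polynomials of precisely the form $\prod_{k \in Z}(y - \alpha_k)$ at chosen points, matching the entries $M_{\ell, j}$ under a specialization $x_k \mapsto \alpha_k$.  Thus the constructive content of GM-MDS directly supplies the required specialization.
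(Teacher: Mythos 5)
Your proposal is correct and rests on the same engine as the paper's proof: the GM-MDS theorem applied with $T_i = S_i$ (the Cerberus condition translating exactly into the GM-MDS hypothesis for $m = n-3$), together with the explicit Reed--Solomon form of the transformed matrix, whose $(i,j)$ entry is $\prod_{k \in [n]\setminus S_i}(x_j - x_k)$. The packaging differs in two pleasant ways: you identify the coefficient vector of $d\operatorname{cr}_{S_j}$ via $\operatorname{PGL}_2$-invariance and a $3\times 4$ Vandermonde kernel rather than by direct differentiation (you should still note that $d\operatorname{cr}_{S_j} \neq 0$, which is immediate from any one partial derivative), and you pass through the partial-fraction reduction to the polynomials $Q_j(y) = \prod_{k\notin S_j}(y - x_k)$ rather than matching the differential matrix with $GM$ by row and column scaling. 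Where you make life harder than necessary is the final step: the version of GM-MDS the paper uses (Theorem~\ref{thm:gm-mds} together with Proposition~\ref{prop:gm-mds-extra}) is already stated over the field $K(x_1,\dotsc,x_n)$ with the $x_k$ as indeterminates, so your evaluation matrix $\bigl(Q_j(x_\ell)\bigr)$ is literally an invertible $G$ times a square Vandermonde submatrix in distinct indeterminates, and its determinant is nonzero symbolically; no specialization to a large extension field, and no appeal to the internals of the Lovett or Yildiz--Hassibi proofs, is needed.
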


To prove Proposition~\ref{prop:linearindependence}, we use a result from the theory of error correcting codes called the GM-MDS theorem.
Although the theorem has several variants, we present the result in the manner which is most useful for us.\footnote{For some additional context, a linear code $C$ is a $k$-dimensional subspace of $\mathbb{F}_q^n$ for some $q$. A code is maximum distance separable (MDS) if all Pl\"{u}cker coordinates of this subspace are nonzero. A \emph{generator matrix} is a matrix whose rows form a basis for $C$. A code $C$ has many possible generator matrices. The GM-MDS conjecture of \cite{DauSongYuen} gave a combinatorial description of which entries of a generator matrix can be chosen to be zero for some MDS code $C$ over $\mathbb{F}_q$. In the course of proving the GM-MDS conjecture, it was found that it is sufficient for $C$ to be a particular kind of code known as a Reed--Solomon code. This version of the result is how we formulate Theorem~\ref{thm:gm-mds}.} 

\begin{theorem}[GM-MDS Theorem, \cite{DauSongYuen}, {\cite[Theorem 1 and Eq. (2)]{YildizHassibi}}, \cite{Lovett}]\label{thm:gm-mds}
    Let $n \ge m \ge 1$ be positive integers, and let $T_1, \hdots, T_{m} \subset [n]$ be sets such that for all $I \subseteq [m]$ nonempty, we have that $\left|\bigcup_{i \in I} T_i\right| \ge n - m + |I|$. Let $x_1, \dotsc, x_n$ be indeterminates, and let $M$ be the $m \times n$ matrix with $M_{ij} = x_j^{i-1}$. Then there is $G \in \operatorname{GL}_m$ such that  $(GM)_{ij} = 0$ unless $j \in T_i$.
    
\end{theorem}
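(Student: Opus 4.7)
The plan is to translate the conclusion into a polynomial non-vanishing problem and then establish the non-vanishing by induction on $m$, following the overall strategy of \cite{Lovett}. The row span of $M$ equals the image of the space $K[X]_{<m}$ of polynomials of degree less than $m$ under evaluation at $x_1,\dotsc,x_n$, so producing $G$ is equivalent to producing polynomials $f_1,\dotsc,f_m$ of degree less than $m$ (with coefficients in some extension of $K$) such that $f_i(x_j) = 0$ whenever $j \notin T_i$, and such that the $m\times n$ evaluation matrix $N=[f_i(x_j)]_{i,j}$ has full row rank. Setting $Z_i = [n]\setminus T_i$, the hypothesis applied with $I=\{i\}$ gives $|Z_i|\le m-1$, and the vanishing constraints force the factorization $f_i = p_i g_i$, where $p_i(X) = \prod_{j\in Z_i}(X - x_j)$ and $\deg g_i \le m - 1 - |Z_i|$.

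Writing $g_i = \sum_{k=0}^{m-1-|Z_i|} a_{i,k} X^k$ with the $a_{i,k}$ treated as fresh indeterminates, each entry of $N$ is a polynomial in $\{a_{i,k}\} \cup \{x_j\}$. The hypothesis is equivalent, by complementation, to $|\bigcap_{i\in I} Z_i| \le m - |I|$ for every nonempty $I\subseteq [m]$, which in turn implies Hall's marriage condition for $(T_i)_{i\in[m]}$ since $n\ge m$. A system of distinct representatives $\tau\colon [m]\hookrightarrow [n]$ with $\tau(i)\in T_i$ therefore exists, and it suffices to show the $m\times m$ minor $\det N_\tau = \det[f_i(x_{\tau(j)})]_{i,j}$ is not identically zero as a polynomial in $\{a_{i,k}\}\cup\{x_j\}$; any specialization of the $a_{i,k}$ at which it does not vanish then yields the desired $G$.

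I would prove non-vanishing of $\det N_\tau$ by induction on $m$. The base case $m=1$ is immediate since $f_1 = p_1 a_{1,0}$ and $p_1(x_{\tau(1)})\neq 0$. For the inductive step, the essential combinatorial input is to locate a \emph{pivot} $j^\star\in T_1$ such that the truncated family $(T_i\setminus\{j^\star\})_{i=2}^{m}$ on the ground set $[n]\setminus\{j^\star\}$ still satisfies the analogous hypothesis with parameters $(n-1,m-1)$; equivalently, no tight subfamily of the dual family $(Z_i)_{i\ge 2}$ is destroyed. Having chosen $j^\star$, one isolates a specific monomial in the $a_{i,k}$ in the expansion of $\det N_\tau$ whose coefficient factors as the determinant for the reduced $(m-1)$-instance (obtained from the family with $j^\star$ and row $1$ removed) times a nonzero polynomial in the $x_j$'s, closing the induction.

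The main obstacle is the existence of the pivot $j^\star$: a careless choice can saturate an intersection $\bigcap_{i\in I}Z_i$ for some $I\subseteq \{2,\dotsc,m\}$ and break the hypothesis for the smaller instance, so $j^\star$ must lie outside every minimal tight subfamily of the $Z_i$'s. Constructing such an element is a Hall-type lemma in its own right, and verifying that the induction correctly tracks the shift of parameters from $(n,m)$ to $(n-1,m-1)$ is the combinatorial crux of the proof.
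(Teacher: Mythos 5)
First, a remark on scope: the paper does not prove Theorem~\ref{thm:gm-mds} at all --- it imports the GM-MDS theorem as a black box from \cite{DauSongYuen}, \cite{YildizHassibi}, and \cite{Lovett} --- so there is no in-paper proof to compare against. Your reduction of the statement to a polynomial problem is correct and standard: the rows of $GM$ are evaluation vectors of polynomials $f_i$ of degree less than $m$, the vanishing pattern forces $f_i = g_i\prod_{j\in Z_i}(X-x_j)$ with $Z_i=[n]\setminus T_i$ and $\deg g_i \le m-1-|Z_i|$, the hypothesis is equivalent to $|\bigcap_{i\in I}Z_i|\le m-|I|$, and it suffices to show a suitable $m\times m$ minor is not identically zero in the $x_j$ and the coefficients of the $g_i$.

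The genuine gap is the inductive step: the pivot $j^\star$ you require need not exist. Your reduction from $(n,m)$ to $(n-1,m-1)$ needs $j^\star\in T_1$ with $j^\star\notin\bigcup_{i\in I}T_i$ for every tight $I\subseteq\{2,\dotsc,m\}$ (equivalently $j^\star\in\bigcap_{i\in I}Z_i$ for all such tight $I$), since otherwise deleting $j^\star$ from the ground set drops the required bound by one without shrinking a tight union. Take $n=4$, $m=3$, $T_1=\{1,2\}$, $T_2=\{1,3\}$, $T_3=\{2,4\}$: the hypothesis $|\bigcup_{i\in I}T_i|\ge 1+|I|$ holds for every nonempty $I$, but $\{2\}$ and $\{3\}$ are both tight and $T_2\cup T_3=[4]\supseteq T_1$, so no admissible pivot exists. (The conclusion is still true there: $f_1=(X-x_3)(X-x_4)$, $f_2=(X-x_2)(X-x_4)$, $f_3=(X-x_1)(X-x_3)$ are linearly independent.) This failure is not incidental --- it is precisely why GM-MDS resisted proof for several years. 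The actual arguments of \cite{Lovett} and \cite{YildizHassibi} do not induct on $m$ by deleting one row and one ground-set element; Lovett first enlarges the $Z_i$ to all have size $m-1$, then inducts by \emph{merging} two sets whose union still satisfies the Hall-type condition, and handles the terminal case (where no merge is possible, so the tight families have a rigid sunflower-like structure) by an explicit determinantal identity. If you want to salvage your outline you would need to replace the single-element pivot with an induction of that kind; as written, the ``Hall-type lemma'' you defer to is false.
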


We also note the following observation of \cite{DauSongYuen}, which gives additional structure for $G$.

\begin{proposition}[{\cite[Eq.(18-21)]{DauSongYuen}}]\label{prop:gm-mds-extra}
    In the context of Theorem~\ref{thm:gm-mds}, if $|T_i| = n-m+1$ for all $i \in [m]$, then for all $i,\ell \in [m]$, we can take $G_{i,\ell}$ to be the coefficient of $t^{\ell}$ in $\prod_{j \in [n] \setminus T_i}(t - x_j)$. Furthermore, in that case $(GM)_{ij} = \prod_{k \in [n] \setminus T_i} (x_j - x_k)$.
\end{proposition}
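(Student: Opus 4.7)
The plan is to verify that the proposed matrix $G$ meets both requirements of Theorem~\ref{thm:gm-mds}, invoking that theorem itself only to obtain invertibility of $G$. Write $U_i := [n]\setminus T_i$, which has size $m-1$ under the equality hypothesis, and set $P_i(t) := \prod_{k\in U_i}(t-x_k) \in K[t]$, where $K := \mathbb{Q}(x_1,\dots,x_n)$. By construction the $i$-th row of $G$ is the coefficient vector of $P_i$, so unwinding the matrix product gives $(GM)_{ij} = \sum_\ell G_{i,\ell}\,x_j^\ell = P_i(x_j) = \prod_{k\in U_i}(x_j-x_k)$, which is the second claim of the proposition; moreover, if $j\notin T_i$ then $j\in U_i$ contributes a vanishing factor $(x_j-x_j) = 0$ to the product, so the zero-pattern required by Theorem~\ref{thm:gm-mds} is satisfied.

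To show $G\in\operatorname{GL}_m$, I would bootstrap from Theorem~\ref{thm:gm-mds} via a polynomial-interpolation argument. Let $G'\in\operatorname{GL}_m$ be any matrix the theorem provides, and for each $i$ consider the polynomial $R_i(t) := \sum_\ell G'_{i,\ell}\,t^\ell\in K[t]$ of degree at most $m-1$. The zero-pattern condition gives $R_i(x_j) = (G'M)_{ij} = 0$ for every $j\in U_i$, so $R_i$ has at least $m-1$ distinct roots in $K$, forcing $R_i = c_i P_i$ for some $c_i\in K$. One must have $c_i\neq 0$, for otherwise $R_i\equiv 0$ would make the $i$-th row of $G'$ vanish, contradicting invertibility of $G'$. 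Comparing coefficients in $R_i = c_i P_i$ yields $G'_{i,\ell} = c_i G_{i,\ell}$ for all $\ell$, i.e.\ $G' = DG$ for the invertible diagonal matrix $D = \operatorname{diag}(c_1,\dots,c_m)$, and therefore $G = D^{-1}G'\in\operatorname{GL}_m$.

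The only conceptual step is the interpolation-rigidity observation that the prescribed zero pattern, together with the degree-$(m-1)$ polynomial structure of each row of $G'M$, pins $G'$ down up to left multiplication by a diagonal matrix; given this, Theorem~\ref{thm:gm-mds} is the sole nontrivial input and no further obstacle arises.
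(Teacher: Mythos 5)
Your argument is correct. Note that the paper does not actually prove this proposition --- it is quoted directly from \cite[Eq.\ (18--21)]{DauSongYuen}, where $G$ is constructed explicitly as part of the proof of the GM-MDS theorem in the Reed--Solomon setting. So your derivation is a genuinely different (and self-contained) route: rather than exhibiting $G$ from first principles, you verify the zero pattern by the evaluation identity $(GM)_{ij}=P_i(x_j)$ and then obtain invertibility by \emph{bootstrapping} from the existence statement of Theorem~\ref{thm:gm-mds}, using the interpolation-rigidity observation that when $|T_i|=n-m+1$ the set $U_i=[n]\setminus T_i$ has exactly $m-1$ elements, so any degree-$\le m-1$ polynomial vanishing on $\{x_k:k\in U_i\}$ is a scalar multiple of $P_i$; hence any invertible $G'$ with the prescribed zero pattern equals $DG$ for a diagonal invertible $D$. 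This buys logical economy (the proposition becomes a corollary of the bare existence statement, with no need to open up the source's construction), at the cost of being non-constructive about \emph{why} such a $G'$ exists in the first place --- which is fine, since that is exactly what Theorem~\ref{thm:gm-mds} supplies. The only blemish is an indexing shift: since $M_{\ell j}=x_j^{\ell-1}$, one has $(GM)_{ij}=\sum_{\ell}G_{i,\ell}x_j^{\ell-1}$, so for this to equal $P_i(x_j)$ the entry $G_{i,\ell}$ should be the coefficient of $t^{\ell-1}$ in $P_i(t)$; this off-by-one is already present in the statement as quoted and does not affect the substance of your proof.
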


We can now prove Proposition~\ref{prop:linearindependence}.

\begin{proof}[Proof of Proposition~\ref{prop:linearindependence}]
Using the quotient rule and the product rule, we compute
\begin{multline*}
 d \operatorname{cr}_{\{1, 2, 3, 4\}}
= \Big[(x_2 - x_3)(x_2 - x_4)(x_3 - x_4) dx_1 - (x_1 - x_3)(x_1 - x_4)(x_3 - x_4) dx_{2} \\ 
 + (x_1 - x_2)(x_1 - x_4)(x_2 - x_4) dx_3 - (x_1 - x_2)(x_1 - x_3)(x_2 - x_3) dx_4 \Big] (x_1-x_4)^{-2}(x_2-x_3)^{-2}.
\end{multline*}
It suffices to show that the $(n-3) \times n$ matrix whose rows are given by the $d\operatorname{cr}_{S_i}$ has full rank; we may check this after scaling each row to clear denominators. 
By the computation above, if $S_j = \{j_1, j_2, j_3, j_4\}$, then the $i$-th row has $(-1)^{\ell + 1} \prod_{a < b \in S_i, a, b \not= j_{\ell}} (x_a - x_b)$ in the $j_{\ell}$-th entry and is zero otherwise. 

Let $M$ be the $(n-3) \times n$ matrix whose $i$-th column is given by $(1, x_i, \dotsc, x_i^{n-4})$. We apply Theorem~\ref{thm:gm-mds} with $m = n-3$ and each $T_i = S_i$. Because $S_1, \dotsc, S_{n-3}$ satisfies the Cerberus condition, there is an invertible $(n-3) \times (n-3)$ matrix $G$ such $(GM)_{ij} = 0$ unless $j \in S_i$. By Proposition~\ref{prop:gm-mds-extra}, we may pick $G$ such that $(GM)_{ij} = \prod_{k \in [n]\setminus S_i}(x_j - x_k)$. 

If we multiply the $i$-th row of $GM$ by $\prod_{a < b \in S_i} (x_a - x_b)$ and divide the $j$-th column of $GM$ by $\prod_{k \in [n] \setminus j} (x_j - x_k)$, then the $(i, j)$-th entry is $\frac{\prod_{a < b \in S_i} (x_a - x_b)}{\prod_{k \in S_i, k \not= j} (x_j - x_{k})}$. This is exactly the matrix obtained from taking the differentials of the cross-ratios and clearing denominators. Because $G$ is invertible and $M$ has full-rank, we see that $GM$ has full rank. 
\end{proof}

\begin{remark}\label{rem:char}
Because the GM-MDS theorem holds over a field of any characteristic, Proposition~\ref{prop:linearindependence} implies that, in any characteristic, the rational functions corresponding to cross-ratios form a separating transcendence basis when the Cerberus condition is satisfied. This implies that the map $\overline{M}_{0,n} \to (\mathbb{P}^1)^{n-3}$ given by $n-3$ cross-ratios is generically \'{e}tale when it is dominant. 
\end{remark}

\begin{remark}\label{rem:comb}
It is tempting to try to prove Theorem~\ref{thm:cerberus} using the non-negative recursion given by Theorem~\ref{thm:recursion}. The following combinatorial statement is equivalent to Proposition~\ref{prop:crossratiodegcase}: Let $S_1, \dotsc, S_{n-3} \subset [n]$ be sets satisfying the Cerberus condition with each $S_j$ of size $4$, and suppose $S_1 = \{1, 2, 3, 4\}$. Then we can partition $[n]$ into two parts $P$ and $Q$ with $\{1, 2\} \subset P$ and $\{3, 4\} \subset Q$ such that
\begin{enumerate}
\item For all $j > 1$, $|S_j \cap P| \not= 2$.
\item Say $S_2, \dotsc, S_k$ have $|S_j \cap P| > 2$, and $S_{k+1}, \dotsc, S_{n-3}$ have $|S_j \cap Q| > 2$. Form a set system $T_2, \dotsc, T_{k}$ in $P \cup \star$ by setting $T_j = S_j$ if $|S_j \cap P| = 4$, and $T_j = (S_j \cap P) \cup \star$ if $|S_j \cap P| = 3$ for $j = 2, \dotsc, k$, and similarly form a set system in $Q \cup \star$ using $S_{k+1}, \dotsc, S_{n-3}$. Then both of these set systems satisfy the Cerberus condition. 
\end{enumerate}
We have been unable to prove this statement directly. In a communication with Hunter Spink we were able to prove it in the case when $1 \in S_j$ for all $j$, when there is a unique choice of bipartition that works.
\end{remark}

Let $I$ be the group of Euclidean isometries of the plane $\RR^2$.  For a graph $G$ with vertex set $[v]$ and edges $E$, consider the map $\Psi_G \colon (\RR^2)^{[v]} / I \to \RR^E$ given by $[p \colon [v] \to \RR^2] \mapsto \big(\| p(a) - p(b)\|^2\big)_{(a,b)\in E}$.
The graph $G$ is said to be \emph{generically rigid} if $\Psi_G$ has finite fibers over general points of $\operatorname{Image}(\Psi_G)$.
Note that $\dim \operatorname{Image}(\Psi_G) = 2v-3$ if $G$ is generically rigid, and thus, if $|E|>2v-3$, there is some $E'\subsetneq E$ such that $G' = ([v],E')$ is generically rigid.
Hence, a generically rigid graph $G$ is said to be \emph{minimally rigid} if $\Psi_G$ is further dominant.
By combining Theorem~\ref{thm:cerberus} with \cite{LamanSphere}, we obtain a new proof of Laman's theorem \cites{PollaczekGeiringer,Laman}, stated below.

\begin{corollary}\label{cor:laman}
A graph $G = ([v],E)$ is minimally rigid if and only if it has $2v-3$ edges and every induced subgraph on $v'>1$ vertices has at most $2v'-3$ edges.
\end{corollary}

\begin{proof}
Let $S = \{(x,y,z) \in \RR^3 : x^2 + y^2 + z^2 = 1\}$ be the 2-dimensional sphere, and consider the map $\widetilde\Psi_G \colon S^{[v]} / SO_3(\RR) \to \RR^E$ given by $[p\colon [v] \to S] \mapsto \big(\frac{1 - p(a) \bullet p(b)}{2}\big)_{(a,b)\in E}$, where $\bullet$ denotes the standard dot product, which is essentially the arc length between $p(a)$ and $p(b)$.
A direct argument \cite{Pogorelov} shows that a graph is minimally rigid (in the plane) if and only if it is \emph{minimally rigid in the sphere}, i.e., the map $\widetilde\Psi_G$ is generically finite.  By dimensional considerations, this happens only if $|E| = 2v-3$, and in this case we need check whether $\widetilde\Psi_G$ is dominant, which can be done after base changing to $\CC$.  \cite{LamanSphere} showed that the degree of the complexified map $\widetilde\Psi_{G, \CC}$ is equal to a cross-ratio degree on $\Mbar_{0,2v}$, as described in \ref{ref:lamansphere}.  Translating the Cerberus condition on subsets of $[2v]$ of the form $\{a,b,a+v, b+v\}$ for $(a,b) \in E$ to a condition on induced subgraphs of $G$ and applying Theorem~\ref{thm:cerberus} yields the desired result.
\end{proof}

\section{Recursion}\label{sec:recursion}

In this section, we prove Theorem~\ref{thm:recursion} and deduce from it an interpretation of cross-ratio degrees, i.e., Kapranov degrees when $|S_j| = 4$ for all $j$, in terms of marked trivalent trees (Corollary~\ref{cor:comb}). 

\medskip
We prepare with two lemmas about $X_{S,i}$ and boundary divisors on $\Mbar_{0,n}$.
For a partition $[n] = P\sqcup Q$ with $|P|\geq 2$ and $|Q|\geq 2$, let $D_{P|Q}$ be the corresponding boundary divisor, i.e., 
\[
D_{P|Q} = \text{the closure of }\left\{C \in \Mbar_{0,n} : \begin{matrix} \text{$C$ has two $\PP^1$-components, each with distinct marked points}\\ \text{labelled by $P$ or $Q$ respectively}\end{matrix}\right\}.
\]
With $\{\star\}$ a singleton disjoint from $[n]$, the gluing map
$\operatorname{gl}_{P|Q} \colon \Mbar_{0, P \cup \star} \times \Mbar_{0, Q \cup \star} \to \Mbar_{0,n}$, which glues two marked curves $C_1\in  \Mbar_{0, P \cup \star}$ and $C_2\in  \Mbar_{0, Q \cup \star}$ by identifying the points marked by $\star$, is an isomorphism onto the boundary divisor $D_{P|Q}$.
Let $\pi_P \colon \Mbar_{0, P \cup \star} \times \Mbar_{0, Q \cup \star} \to \Mbar_{0, P \cup \star}$ be the projection onto the first factor, and similarly for $\pi_Q$. 

\begin{lemma}\label{lem:boundary}
Let $(S,i)$ be a pair with $S\subseteq [n]$, $|S|\geq 4$, and $i\in S$. If $a \in S$ is an element distinct from $i$, then we have that
\[
X_{S, i} = X_{S\setminus a, i} + \sum_{P, Q} D_{P | Q},
\] where the sum ranges over all bipartitions $[n] = P \sqcup Q$ such that $\{a, i\} \subseteq P$ and $S \setminus \{a, i\} \subseteq Q$. 
\end{lemma}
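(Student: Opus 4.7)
The plan is to reduce the identity to two classical facts about genus-zero moduli: the $\psi$-class comparison formula for single-point forgetful maps, and the boundary-divisor pullback formula for forgetful maps.

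First I would factor the forgetful map $f_{S\setminus a}\colon \Mbar_{0,n}\to \Mbar_{0,S\setminus a}$ as $\pi\circ f_S$, where $\pi\colon \Mbar_{0,S}\to \Mbar_{0,S\setminus a}$ drops the marking $a$. The standard comparison formula for $\psi$-classes under a single-point forgetful morphism (valid since $a\neq i$) reads
\[
\psi_i^{\Mbar_{0,S}} \;=\; \pi^{\ast}\psi_i^{\Mbar_{0,S\setminus a}} \;+\; \delta,
\]
where $\delta = D_{\{a,i\}\mid S\setminus\{a,i\}}$ is the boundary divisor on $\Mbar_{0,S}$ parametrizing curves with a rational tail carrying exactly the markings $a$ and $i$. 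Pulling this identity back along $f_S$ and using $f_S^{\ast}\pi^{\ast}\psi_i = f_{S\setminus a}^{\ast}\psi_i$, the left-hand side becomes $X_{S,i}$ and the first term on the right becomes $X_{S\setminus a,i}$, reducing the lemma to the computation of $f_S^{\ast}\delta$.

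Next I would invoke the pullback formula for boundary divisors under forgetful maps: for any forgetful morphism $f_T\colon \Mbar_{0,n}\to \Mbar_{0,T}$ and boundary divisor $D_{A\mid B}$ on $\Mbar_{0,T}$,
\[
f_T^{\ast} D_{A\mid B} \;=\; \sum_{\substack{[n] = P\sqcup Q \\ P\cap T = A,\ Q\cap T = B}} D_{P\mid Q}.
\]
This follows by decomposing $f_T$ as an iterated composition of single-point forgetful maps $\Mbar_{0,m+1}\to \Mbar_{0,m}$ (each of which is the universal curve), and observing that under each such map the preimage of a boundary divisor breaks up into the two boundaries obtained by placing the dropped marking on either side, each appearing with multiplicity one. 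Applying this with $T=S$, $A = \{a,i\}$, and $B = S\setminus\{a,i\}$ produces exactly the indexed sum in the lemma, since the conditions $\{a,i\}\subseteq P$ and $S\setminus\{a,i\}\subseteq Q$ enumerate precisely the bipartitions in question, and the cardinality constraints $|P|,|Q|\geq 2$ required for $D_{P\mid Q}$ to be a valid boundary divisor are automatic given $|S|\geq 4$.

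The main obstacle is not really conceptual but bookkeeping: one must verify that each boundary divisor appears with multiplicity exactly one in the pullback, and confirm that the claimed comparison of $\psi$-classes and $\pi^{\ast}\psi$ indeed has the boundary $\delta$ with coefficient $+1$ (and no other correction terms). Both facts are transparent from the inductive description of $\Mbar_{0,m+1}\to \Mbar_{0,m}$ as the universal curve together with its $m$ tautological sections, and the identification of $\delta$ with the image of the section at $i$.
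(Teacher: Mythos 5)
Your proposal is correct and follows essentially the same route as the paper: the comparison formula $\psi_i = \pi^{\ast}\psi_i + D_{\{a,i\}\mid S\setminus\{a,i\}}$ on $\Mbar_{0,S}$ (which the paper cites from Arbarello--Cornalba) pulled back along $f_S$, combined with the standard boundary-divisor pullback formula for forgetful maps. The only difference is that you sketch derivations of these two inputs from the universal-curve description rather than citing them, which is fine.
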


\begin{proof}
Consider the forgetful map $f_{S\setminus a} \colon \Mbar_{0,S} \to \Mbar_{0,S\setminus a}$. By \cite[Lemma 3.1]{ArabelloCornalba}, we have that
$\psi_i = X_{S\setminus a, i} + D_{\{a, i\} | (S \setminus \{a,i\})}$ on $\overline{M}_{0, S}$. 
The pullback of any $D_{P' |Q'}$ along $f_S \colon \Mbar_{0, n} \to \Mbar_{0, S}$ is the sum of the boundary divisors on $\Mbar_{0,n}$ corresponding to the partitions obtained by adding elements of $[n] \setminus S$ to either $P'$ or $Q'$. The result then follows. 
\end{proof}

\begin{lemma}\label{lem:glueing}
Let $(S,i)$ be a pair with $S\subseteq [n]$, $|S|\geq 4$, and $i\in S$. For a partition $[n] = P\sqcup Q$ with $|P|\geq 2$ and $|Q|\geq 2$, we have
\[
\operatorname{gl}_{P | Q}^* X_{S, i} = \begin{cases}
\pi_P^* X_{S,i} & \text{if $S\subseteq P$}\\
\pi_P^* X_{(S \setminus i)\cup \star, \star} & \text{if $S\cap P = S\setminus i$}\\
\pi_P^* X_{(S\cap P)\cup \star, i} & \text{if $S\cap P\supseteq \{i,j\}$ for some $i\neq j \in S$}\\
\end{cases},
\]
and by symmetry, one obtains the remaining three cases by replacing $P$ with $Q$.
\end{lemma}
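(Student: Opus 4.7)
I would prove the lemma by writing the composition $\phi \coloneqq f_S \circ \operatorname{gl}_{P|Q} \colon \Mbar_{0, P\cup \star}\times \Mbar_{0, Q\cup \star} \to \Mbar_{0,S}$ explicitly in each case as a composition of simpler maps, and then applying two standard identities for $\psi$-classes: \emph{(i)} for a gluing map $\operatorname{gl}_{A | B}\colon \Mbar_{0, A\cup \star}\times\Mbar_{0, B\cup \star}\to \Mbar_{0, A\cup B}$ and any $i \in A$, one has $\operatorname{gl}_{A | B}^*\psi_i = \pi_A^*\psi_i$; and \emph{(ii)} the $\psi$-classes transform in the obvious way under relabeling isomorphisms. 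The principal bookkeeping is determining which components of a curve in the image of $\operatorname{gl}_{P|Q}$ become unstable under $f_S$ and hence contract, which is dictated entirely by the sets $S_P \coloneqq S\cap P$ and $S_Q \coloneqq S \cap Q$.

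In Case 1 ($S\subseteq P$), we have $S_Q = \emptyset$, so after applying $f_S$ the $Q$-component retains only the node $\star$ as a special point and contracts to a smooth point on the $P$-component. Therefore $\phi = f_S\circ \pi_P$, where $f_S\colon \Mbar_{0, P\cup \star}\to \Mbar_{0, S}$ is the forgetful map discarding $(P\setminus S)\cup\{\star\}$, and the conclusion $\phi^*\psi_i = \pi_P^*X_{S,i}$ is immediate. In Case 2 ($S_Q = \{i\}$), the $Q$-component after $f_S$ carries only the special points $\{i,\star\}$ and contracts; the marked point $i$ is transported to the location of the former node $\star$ on the $P$-component. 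Thus $\phi = \rho \circ f \circ \pi_P$ with $f\colon \Mbar_{0, P\cup \star}\to \Mbar_{0, (S\setminus i)\cup \star}$ forgetting $P\setminus S$ and $\rho\colon \Mbar_{0, (S\setminus i)\cup \star}\xrightarrow{\sim}\Mbar_{0,S}$ the relabeling $\star \mapsto i$. Since $\rho^*\psi_i = \psi_\star$, this produces $\pi_P^*X_{(S\setminus i)\cup \star,\star}$.

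In Case 3, $i\in S_P$ and $|S_P|\geq 2$, and I would split into two subcases according to whether the $Q$-component remains stable after $f_S$. If $|S_Q|\geq 2$, both components stay stable, and $\phi$ factors as $\operatorname{gl}_{S_P | S_Q}\circ (f_P\times f_Q)$, where $f_P \colon \Mbar_{0, P\cup \star}\to \Mbar_{0, S_P\cup \star}$ and $f_Q\colon \Mbar_{0, Q\cup \star}\to \Mbar_{0, S_Q\cup \star}$ forget $P\setminus S$ and $Q\setminus S$ respectively. Identity \emph{(i)} applied with $i \in S_P$, combined with the compatibility $(f_P\times f_Q)^*\pi_{S_P}^* = \pi_P^* f_P^*$, yields $\phi^*\psi_i = \pi_P^*X_{(S\cap P)\cup \star, i}$. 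If instead $|S_Q| = 1$, say $S_Q = \{a\}$, then as in Case 2 the $Q$-component contracts and the marking $a$ lands at the former node; now $\phi = \rho_a \circ f_P \circ \pi_P$ for the relabeling $\rho_a\colon \Mbar_{0, S_P\cup \star}\xrightarrow{\sim}\Mbar_{0, S}$ sending $\star$ to $a$. Since $i \neq a$, $\rho_a^*\psi_i = \psi_i$, and once again $\phi^*\psi_i = \pi_P^*X_{(S\cap P)\cup \star, i}$.

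The hard part is not any single algebraic step but the correct tracking of unstable-component contraction and the resulting identification of how the marked point $i$, the node $\star$, and elements of $S_Q$ interact on the contracted component; once this is in place, each case reduces to a one-line application of the two $\psi$-class identities above. The three symmetric cases obtained by swapping the roles of $P$ and $Q$ follow by the identical argument.
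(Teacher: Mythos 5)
Your proposal is correct and takes essentially the same approach as the paper: the paper's (much terser) proof likewise reduces everything to understanding how $f_S$ contracts unstable components of a two-component curve in $D_{P|Q}$ and then invokes the description of $X_{S,i}$ as the cotangent line at the $i$-th marked point of $f_S(C)$. Your version simply makes this explicit by factoring $f_S\circ\operatorname{gl}_{P|Q}$ through forgetful, relabeling, and gluing maps and applying the standard $\psi$-class pullback identities, with the case analysis (including the $|S\cap Q|=1$ subcase of Case 3) handled correctly.
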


\begin{figure}[h]
\begin{tikzpicture}

\draw [black, thick] plot [smooth, tension=1] coordinates {(-1.5,-0.3) (0,-0.2) (1,0.6)};
\draw [black, thick] plot [smooth, tension=1] coordinates {(0.5,0.6) (1.5,-0.2) (3,-0.3)};

\node[above] at (-0.5,0.4) {$P$};
\node[above] at (2,0.4) {$Q$};

\node at (-1.2,-0.35) {\color{blue!50!}\ding{170}};
\node at (-0.8,-0.35) {\color{blue!50!}\ding{170}};
\filldraw (-0.4,-0.3) circle[radius=1.8pt];
\node at (0.0,-0.2) {\color{blue!90!}\ding{170}};
\node at (0.4,0.0) {\color{blue!50!}\ding{170}};

\node[above] at (0,-0.1) {$i$};

\filldraw (1.1,0.0) circle[radius=1.8pt];
\filldraw (1.5,-0.2) circle[radius=1.8pt];
\filldraw (1.9,-0.3) circle[radius=1.8pt];
\filldraw (2.3,-0.35) circle[radius=1.8pt];
\filldraw (2.7,-0.35) circle[radius=1.8pt];

\draw [black, thick] plot [smooth, tension=1] coordinates {(3.5,-0.3) (5,-0.2) (6,0.6)};
\draw [black, thick] plot [smooth, tension=1] coordinates {(5.5,0.6) (6.5,-0.2) (8,-0.3)};

\node[above] at (4.5,0.4) {$P$};
\node[above] at (7,0.4) {$Q$};

\node at (3.8,-0.35) {\color{blue!50!}\ding{170}};
\node at (4.2,-0.35) {\color{blue!50!}\ding{170}};
\filldraw (4.6,-0.3) circle[radius=1.8pt];
\filldraw (5, -0.2) circle[radius=1.8pt];
\node at (5.4, 0.0) {\color{blue!50!}\ding{170}};

\node[above] at (7.3,-0.25) {$i$};

\filldraw (6.1,0.0) circle[radius=1.8pt];
\filldraw (6.5,-0.2) circle[radius=1.8pt];
\filldraw (6.9,-0.3) circle[radius=1.8pt];
\node at (7.3,-0.35) {\color{blue!90!}\ding{170}};
\filldraw (7.7,-0.35) circle[radius=1.8pt];

\draw [black, thick] plot [smooth, tension=1] coordinates {(8.5,-0.3) (10,-0.2) (11,0.6)};
\draw [black, thick] plot [smooth, tension=1] coordinates {(10.5,0.6) (11.5,-0.2) (13,-0.3)};

\node[above] at (9.5,0.4) {$P$};
\node[above] at (12,0.4) {$Q$};

\filldraw (8.8,-0.35) circle[radius=1.8pt];
\filldraw (9.2,-0.35) circle[radius=1.8pt];
\filldraw (9.6,-0.3) circle[radius=1.8pt];
\node at (10.0, -0.2) {\color{blue!90!}\ding{170}};
\node at (10.4, 0.0) {\color{blue!50!}\ding{170}};

\node[above] at (10.0,-0.1) {$i$};
\node[above] at (10.4,0.1) {$j$};

\filldraw (11.1,0.0) circle[radius=1.8pt];
\node at (11.5,-0.2) {\color{blue!50!}\ding{170}};
\node at (11.9,-0.3) {\color{blue!50!}\ding{170}};
\filldraw (12.3,-0.35) circle[radius=1.8pt];
\filldraw (12.7,-0.35) circle[radius=1.8pt];

\end{tikzpicture}
\caption{Illustrations of rational curves in the three cases of \cref{lem:glueing}. The left rational component supports marked points indexed by $P$, the black round marked points are not indexed by $S$, and the blue hearts are marked points indexed by $S$.}
\label{fig:cases}
\end{figure}
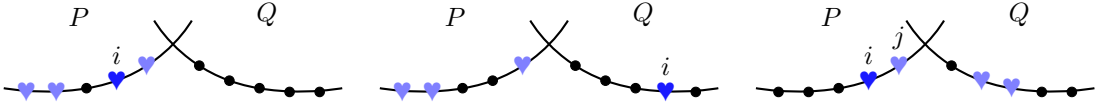

\begin{proof}
For a curve $C \in \overline{M}_{0,n}$, the curve $f_S(C)  \in \overline{M}_{0, S}$ is obtained by forgetting the points not in $S$ and contracting unstable components. The lemma follows from the fact that the divisor $X_{S, i}$ corresponds to the line bundle whose fiber at $C$ is the cotangent line at the $i$-th marked point on $f_S(C)$.
\end{proof}

\begin{proof}[Proof of Theorem~\ref{thm:recursion}]
By Lemma~\ref{lem:boundary}, we have that
\begin{equation*}\begin{split}
\int_{\Mbar_{0,n}} X_{S_1, i_1} \dotsm X_{S_{n-3}, i_{n-3}} = & \int_{\Mbar_{0,n}} X_{S_1\setminus a, i_1} X_{S_2,i_2} \dotsm X_{S_{n-3}, i_{n-3}} \\ 
&+ \sum_{\substack{[n] = P\sqcup Q\\ \{a, i_1\} \subseteq P\\ S_1 \setminus \{a,i_1\} \subseteq Q} } \int_{\Mbar_{0,n}} D_{P| Q} X_{S_2, i_2} \dotsb X_{S_{n-3}, i_{n-3}}.
\end{split}\end{equation*}
By the projection formula, 
$$\int_{\Mbar_{0,n}} D_{P| Q} X_{S_2, i_2} \dotsb X_{S_{n-3}, i_{n-3}} = \int_{\Mbar_{0, P \cup \star} \times \Mbar_{0, Q \cup \star}} \operatorname{gl}_{P | Q}^* X_{S_2, i_2} \dotsm \operatorname{gl}_{P| Q}^* X_{S_{n-3}, i_{n-3}}.$$
By Lemma~\ref{lem:glueing}, each factor $\operatorname{gl}_{P|Q}^* X_{S_j,i_j}$ on the right-hand-side is pulled back via either $\pi_P$ or $\pi_Q$. By the K\"{u}nneth formula, the intersection number is a product of intersection numbers on $\Mbar_{0, P \cup \star}$ and $\Mbar_{0, Q \cup \star}$. Then Lemma~\ref{lem:glueing} gives the result. 
\end{proof}

When $|S_j| = 4$ for all $j$, i.e., the case of cross-ratio degrees, a repeated application of Theorem~\ref{thm:recursion} yields a combinatorial interpretation of Kapranov degrees, stated in Corollary~\ref{cor:comb} below.
However, this combinatorial interpretation depends on many auxiliary choices, and we do not know whether one can use it to prove Theorem~\ref{thm:cerberus}.

\smallskip
For $T \subseteq [n]$ with $|T| = 4$, we write $X_T = X_{T, i}$ for any $i \in T$ because the class does not depend on the choice of $i$.
We say that two vertex-disjoint paths on a tree are \emph{separated by an edge} if there is a (necessarily unique) edge on the tree incident to a vertex from each path.

\begin{corollary}\label{cor:comb}
Let $S_1, \dotsc, S_{n-3}$ be subsets of $[n]$ of size $4$, and for each $j \in [n-3]$ choose a partition $S_j = \{a_j, b_j\} \sqcup \{c_j, d_j\}$. Then, the cross-ratio degree $\int_{\Mbar_{0,n}} X_{S_1} \dotsb X_{S_{n-3}}$ is the number of trivalent trees with $n$ leaves marked by $[n]$ that satisfy the following condition: The unique path from the leaf marked $a_{n-3}$ to the leaf marked $b_{n-3}$ is separated by an edge from the unique path between the leaves marked $c_{n-3}$ and $d_{n-3}$.
After contracting this edge, the unique path from $a_{n-4}$ to $b_{n-4}$ is separated by an edge from the unique path from $c_{n-4}$ to $d_{n-4}$.  After contracting this edge, similarly for $S_{n-5}$ and so forth.
\end{corollary}

\begin{proof}
By Theorem~\ref{thm:recursion}, we have that

\begin{align}
\int_{\Mbar_{0,n}} X_{S_1} \dotsm X_{S_{n-3}} &= \sum_{\substack{[n] = P\sqcup Q\\ \{a_1, b_1\} \subseteq P\\ \{c_1, d_1\} \subseteq Q}} \left( \int_{\Mbar_{0, P \cup \star}} \prod_{j=2}^{n-3} \operatorname{res}_P(S_j,a_j) \right) \cdot \left( \int_{\Mbar_{0, Q \cup \star}} \prod_{j=2}^{n-3} \operatorname{res}_Q(S_j,a_j)\right),\label{eq:4rec}
\end{align}
as $\int_{\Mbar_{0,n}} X_{S_1\setminus b_1, a_1} X_{S_2, a_2} \dotsm X_{S_{n-3}, a_{n-3}} = 0$. For $P \subset [n]$ and $j \in [n-3]$, define
\[ 
\widehat{\operatorname{res}}_P(S_j) = \begin{cases}
S_j & S_j \subseteq P\\
(S_j \cap P) \cup \{*\} & |S_j \cap P| = 3.\\
\emptyset & \text{otherwise}
\end{cases}
\]
as the combinatorial analogue of $\operatorname{res}_P(S_j, i_j)$ simplified for 4-element sets. For our family of sets $\mathcal S := \{S_j = \{a_j, b_j\} \sqcup \{c_j, d_j\} : j \in [n-3]\}$, we say that a partition $P \sqcup Q$ is \emph{good} if it satisfies the following criteria:
\begin{itemize}
  \item $a_1, b_1 \in P$ and $c_1, d_1 \in Q$
  \item For $R \in \{P, Q\}$, the collection $\mathcal S_R := \{\widehat{\operatorname{res}}_R(S_j) : j \in \{2, \hdots, n-3\}, |S_j \cap R| \ge 3\}$ has size $|R|-2$ and satisfies the Cerberus conditions on $R \cup \{*_1\}$, where we label $*$ after $S_1$ to disambiguate.
\end{itemize}
As in Remark~\ref{rem:comb}, only good partitions can give a nonzero contribution to (\ref{eq:4rec}).

By recursively unrolling the integrals in (\ref{eq:4rec}), we can identify $\int_{\Mbar_{0,n}} X_{S_1} \dotsm X_{S_{n-3}}$ with the number of \emph{good binary trees}. Each internal node of a good binary tree is labeled by aground set $D$ along with a configuration $\mathcal S$ of subsets of $D$ which satisfy the Cerberus condition. The two edges from this node are labeled $P$ and $Q$, where $P \sqcup Q$ is a good partition with respect to $\mathcal S$. The two children nodes correspond to $(P \cup \{*\}, \mathcal S_P)$ and $(Q \cup \{*\}, \mathcal S_Q)$. The root is $([n], \mathcal S)$, and the leaves are of the form $(D, \emptyset)$, where $|D|=3$. Observe also that for each $S_j \in \mathcal S$, the set $S_j$ is split into two parts of size 2 in exactly one internal node of the tree.

It thus suffices to exhibit a bijection between the set of good binary trees and the set of trivalent trees with the prescribed property. To go from a good binary tree to a trivalent tree, we first recursively construct the trivalent trees $\mathcal T_P$ and $\mathcal T_Q$ corresponding to $P \cup \{*\}$ and $Q \cup \{*\}$, respectively. We combine $\mathcal T_P$ and $\mathcal T_Q$ by identifying the edges connected to $*$ in each tree such that the two $*$'s identify with distinct vertices. We label this edge by $*_i$, where $S_i$ is the set split at this node of the tree.  At the leaves of the form $(D, \emptyset)$, where $|D|=3$, we connect the three vertices to a single internal vertex. Let $\mathcal T_{n}$ be the final trivalent tree.

To see why $\mathcal T_n$ has the prescribed property, consider an arbitrary set $S_j$ with $j \in [n-3]$ and look at the internal node of the good binary tree in which $S_i$ is split into $\{a^*_i, b^*_i\}$ and $\{c^*_i, d^*_i\}$, where the $*$'s signify that the labels may have changed (repeatedly) during the recursion. The descents of $S_i$ only split $S_j$'s with $j > i$. Thus, when the edges corresponding to these larger $S_j$'s are contracted, we have that $a^*_i$ and $b^*_i$ are each adjacent to one side of $*_i$ while $c^*_i$ and $d^*_i$ are adjacent to the other side. As we traverse upward from the node splitting $S_j$ to the root of the tree, we will connect $a^*_i$ by a path (possibly of length $0$) to $a_i$, and so forth, however none of these paths use $*_i$. Thus, the path from $a_i$ to $b_i$ and the path from $c_i$ to $d_i$ are connected by a single edge once $*_{i+1}, \hdots, *_{n-3}$ are contracted, as desired.

To invert the bijection, let $\mathcal T$ be a given trivalent tree. Label the internal edges of $\mathcal T$ as $*_{n_3}, \hdots, *_1$ based on which edge is contracted when processing $S_{n-3}, \hdots, S_1$, respectively. Replace $*_1$ with two disconnected vertices labeled $*$, each connected to one of the two internal vertices incident to $*_1$. Let $P$ and $Q$ be the subsets $[n]$ on each half of the split, with $a_1, b_1 \in P$; and let $\mathcal T_P$ and $\mathcal T_Q$ be the resulting trivalent trees. For each $j \in \{2, \hdots, n-3\}$, let $\mathcal S_P = \{\widehat{\operatorname{res}}_P(S_j) :  j \in \{2, \hdots, n-3\}, |S_j \cap R| \ge 3\}$. Note that $\widehat{\operatorname{res}}_P(S_j) \in \mathcal S_P$ if and only if $*_j$ is on the $P$-side of the tree split. From this, we can see that $\mathcal S_P$ satisfies the Cerberus condition: for any subset of set indices $J$, contract the internal edges of $\mathcal T_P$ not indexed by $J$ and delete any leaf whose label does not lie in $\bigcup_{j \in J} \widehat{\operatorname{res}}_P(S_j)$. Each internal node has degree at least $3$ and there are $|J|$ internal edges, so there are at least $|J|+3$ leaves, as desired. Therefore, $P \sqcup Q$ is a good partition and we can repeate this procedure recursively. In the base case of trivalent trees with a single internal node, we map it to the label $(D, \emptyset)$, where $D$ is the set of labels of the three leaves.
\end{proof}

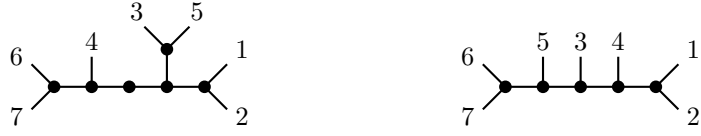
\begin{figure}[h]

    \centering
    \begin{tikzpicture}[style=thick]

     \draw (0,2) -- (2,2);
     \draw (-0.3,2.3) -- (0,2);
     \draw (-0.3,1.7) -- (0,2);
     \draw (0.5,2.4) -- (0.5,2);
     \draw (1.5,2.5) -- (1.5,2);
     \draw (1.2,2.8) -- (1.5,2.5);
     \draw (1.8,2.8) -- (1.5,2.5);
     \draw (2,2) -- (2.3, 2.3); 
     \draw (2,2) -- (2.3, 1.7);

     \foreach \x in {0,0.5,1.5,2}
     \node[inner sep=1.4pt, circle,black,draw,fill] at (\x, 2) {}; 
     \node[inner sep=1.4pt, circle,black,draw,fill] at (1.5, 2.5) {}; 
     \node at (2.5,2.5) {$1$}; 
     \node at (2.5,1.6) {$2$}; 
     \node at (1.1,3) {$3$}; 
     \node at (0.5,2.6) {$4$}; 
     \node at (1.9,3) {$5$}; 
     \node at (-0.5,2.4) {$6$}; 
     \node at (-0.5,1.6) {$7$};

     \draw (6,2) -- (8,2);
     \draw (5.7,2.3) -- (6,2);
     \draw (5.7,1.7) -- (6,2);
     \draw (6.5,2.4) -- (6.5,2);
     \draw (7,2.4) -- (7,2);
     \draw (7.5,2.4) -- (7.5,2);
     \draw (8,2) -- (8.3, 2.3); 
     \draw (8,2) -- (8.3, 1.7);

     \foreach \x in {6,6.5,7,7.5,8}
     \node[inner sep=1.4pt, circle,black,draw,fill] at (\x, 2) {}; 
     \node at (8.5,2.5) {$1$}; 
     \node at (8.5,1.6) {$2$}; 
     \node at (7,2.6) {$3$}; 
     \node at (7.5,2.6) {$4$}; 
     \node at (6.5,2.6) {$5$}; 
     \node at (5.5,2.4) {$6$}; 
     \node at (5.5,1.6) {$7$}; 
     
    \end{tikzpicture}
    \caption{The trivalent trees which, under Corollary~\ref{cor:comb}, correspond to the computation $\int_{\overline{M}_{0,7}} X_{1267} X_{4567} X_{1235} X_{1234}  = 2$.}
    \label{fig:trees}
\end{figure}

\section{Upper bound}\label{sec:upperbound}

In this section, we prove Theorem~\ref{thm:upperbound} in the following way. We may assume that $\{p, q, r\} = \{n-2, n-1, n\}$. 
For $T\subseteq [n]$ with $|T| = 4$, the forgetful map $f_T \colon \Mbar_{0,n} \to \Mbar_{0,T} \simeq \PP^1$ is given by taking the cross-ratio of the points marked by $T$.
Consider the birational map
\[
f \colon \Mbar_{0,n} \to \prod_{j=1}^{n-3} \Mbar_{0,\{j,n-2,n-1,n\}} \simeq (\mathbb{P}^1)^{n-3}.
\]
Using Poincar\'e duality on $(\PP^1)^{n-3}$ and Theorem~\ref{thm:recursion}, we will first compute the pushforward $f_* X_{S,i}$, and then deduce Theorem~\ref{thm:upperbound} by comparing the Kapranov degree with the intersection number of the pushforwards.
We start with an observation that we repeatedly use.

\begin{observation}\label{obs:divisors}
We may choose divisors $D_1, \dotsc, D_{n-3}$ in the linear series of $X_{S_1, i_1}, \dotsc, X_{S_{n-3},i_{n-3}}$, respectively, such that they intersect transversely in finitely many points, all of which are contained in the open moduli subspace ${M}_{0,n}$ of irreducible marked curves.  This is possible because the line bundles corresponding to the $X_{S,i}$ are base-point-free.
\end{observation}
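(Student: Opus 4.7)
The plan is to give a Bertini-type argument that uses the base-point-freeness of the line bundles together with the boundary stratification of $\Mbar_{0,n}$, and exploits characteristic zero as flagged in the paper's note on characteristic.

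First, I would record that each $X_{S_j,i_j} = f_{S_j}^* \psi_{i_j}$ corresponds to a base-point-free line bundle, since $\psi_{i_j}$ on $\Mbar_{0,S_j}$ is base-point-free by definition of the Kapranov morphism $|\psi_{i_j}| \colon \Mbar_{0,S_j} \to \PP^{|S_j|-3}$. Pulling back yields morphisms $\varphi_j \colon \Mbar_{0,n} \to \PP^{|S_j|-3}$ whose preimages of hyperplanes are exactly the divisors in the complete linear series $|X_{S_j,i_j}|$.

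Next I would construct the $D_j$'s inductively via Bertini's theorem, which in characteristic zero guarantees that the preimage of a generic hyperplane under any morphism to projective space is smooth on the smooth locus of the source and meets any fixed subvariety transversely on its smooth locus. Starting from $\Mbar_{0,n}$ itself and applying this with source $D_1 \cap \dotsb \cap D_{j-1}$ at the $j$-th step produces $D_j \in |X_{S_j,i_j}|$ so that $D_1 \cap \dotsb \cap D_j$ is smooth of expected codimension $j$. After $n-3$ steps the intersection is a finite reduced set of points.

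The remaining task, and the main thing to check, is that these points all lie in $M_{0,n}$. Here I would stratify $\Mbar_{0,n}$ by topological type of the stable curve: the open stratum is $M_{0,n}$ of dimension $n-3$, and every other stratum has dimension at most $n-4$. Running the same Bertini argument inside each boundary stratum $\Sigma$ of dimension $d \le n-4$ would show that $D_1 \cap \dotsb \cap D_{n-3} \cap \Sigma$ has expected dimension $d-(n-3) < 0$, hence is empty for a generic tuple. Since the stratification has only finitely many pieces, each contributing an open non-empty condition on the parameter space $\prod_j |X_{S_j,i_j}|$, a tuple satisfying all conditions simultaneously exists. The only delicate point is the simultaneous imposition of generic conditions across finitely many strata, which amounts to intersecting finitely many non-empty Zariski opens in an irreducible parameter space; and the only essential use of characteristic zero is the generic smoothness invoked in applying Bertini to the morphisms $\varphi_j$.
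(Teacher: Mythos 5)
Your argument is correct and is exactly the intended justification: the paper itself only offers the one-line appeal to base-point-freeness, and your Bertini-plus-stratification elaboration (generic members cut the $(n-4)$-dimensional boundary strata in the empty set, and finitely many dense open conditions on the irreducible parameter space $\prod_j |X_{S_j,i_j}|$ can be met simultaneously) is the standard way to make that one-liner precise. No substantive difference from the paper's approach.
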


We now prepare the computation of the pushforward $f_*X_{S,i}$ with a lemma.
As before, when $|T| = 4$ we write $X_T = X_{T,i}$ for any $i\in T$, which is the pullback of the hyperplane class from $\Mbar_{0,n} \to \Mbar_{0,T} \simeq \PP^1$.

\begin{lemma}\label{lem:crossratio1}
Let $S_1, \dotsc, S_{n-3} \subset [n]$ be subsets of size $4$ such that, after reordering, we have that $|\bigcup_{j=1}^{\ell} S_j| = \ell + 3$ for all $\ell =1,\dotsc, n-3$. Then 
$$\int_{\Mbar_{0,n}} X_{S_1} \dotsb X_{S_{n-3}} = 1.$$
\end{lemma}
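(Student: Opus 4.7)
My plan is to prove Lemma~\ref{lem:crossratio1} via a direct geometric argument, using Observation~\ref{obs:divisors} to reduce the statement to counting configurations of $n$ points on $\PP^1$ with prescribed cross-ratios.

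Since each $|S_j|=4$, we have $\Mbar_{0,S_j}\cong \PP^1$ and $X_{S_j}$ is the pullback of the hyperplane class under the cross-ratio map $f_{S_j}\colon \Mbar_{0,n}\to\Mbar_{0,S_j}$; thus every divisor in the linear series $|X_{S_j}|$ has the form $f_{S_j}^{-1}(c_j)$ for some $c_j\in\PP^1$. By Observation~\ref{obs:divisors}, we may choose generic values $c_1,\dotsc,c_{n-3}$ so that the divisors $D_j\coloneqq f_{S_j}^{-1}(c_j)$ meet transversely in finitely many points, all lying in ${M}_{0,n}$, with $\int_{\Mbar_{0,n}} X_{S_1}\dotsb X_{S_{n-3}} = |\bigcap_{j=1}^{n-3}D_j|$.

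After reordering, assume that $|\bigcup_{j=1}^\ell S_j|=\ell+3$ for all $\ell$; for each $j\geq 2$, let $u_j\in S_j$ denote the unique element not contained in $\bigcup_{k<j}S_k$. A point of $\bigcap_j D_j$ corresponds, modulo the $\mathrm{PGL}_2$-action, to a tuple $(x_1,\dotsc,x_n)\in(\PP^1)^n$ of distinct points such that the cross-ratio of the four points marked by $S_j$ equals $c_j$ for each $j$. Fix the $\mathrm{PGL}_2$-action by sending three chosen elements of $S_1$ to $0,1,\infty\in\PP^1$. Then the fourth element of $S_1$ is uniquely determined by $c_1$, since the cross-ratio is a degree-one M\"obius function of any one of the four points in terms of the remaining three. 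Inductively, for each $j\geq 2$, the three elements of $S_j$ other than $u_j$ have their positions fixed at earlier steps, and $u_j$ is then uniquely determined by $c_j$. Hence $|\bigcap_j D_j|=1$, giving the claim. The only subtlety will be checking non-degeneracy of each cross-ratio equation used in the recovery, which is supplied by the genericity of the $c_j$ afforded by Observation~\ref{obs:divisors}.
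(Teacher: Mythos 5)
Your proof is correct and follows essentially the same approach as the paper: both invoke Observation~\ref{obs:divisors} to reduce the intersection number to a count of point configurations with prescribed generic cross-ratios, then normalize three points of $S_1$ to $0,1,\infty$ and observe that each subsequent $S_j$ determines exactly one new marked point. Your version merely spells out the inductive bookkeeping (the elements $u_j$) a bit more explicitly than the paper does.
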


\begin{proof}
By Observation~\ref{obs:divisors}, for generic complex numbers $\lambda_1, \dotsc, \lambda_{n-3}$, the intersection number is the number of points in ${M}_{0,n}$ such that the cross-ratio of the points in $S_j$ is $\lambda_j$ for all $j = 1,\dotsc,n-3$. There is a unique such point: we set three of the marked points in $S_1$ to be $0, 1,$ and $\infty$, and imposing the cross-ratio condition given by each $S_j$ determines an additional point.
\end{proof}

Let $T_j = \{j, n-2, n-1, n\}$ for $j \in \{1, \dotsc, n-3\}$.
The following proposition will amount to computing the pushforward $f_*X_{S,i}$.

\begin{proposition}\label{prop:pushforward}
For $S\subseteq [n]$ with $|S|\geq 4$ and $i\in S$, we have
$$\int_{\Mbar_{0, n}} X_{S, i} X_{T_2} X_{T_3} \dotsm X_{T_{n-3}} = \begin{cases}
|S|-3 & \text{if } i = 1 \\
1 & \text{if }1 \in S \setminus i \\  0 &\text{otherwise, i.e., if $1 \notin S$}.
\end{cases}$$
\end{proposition}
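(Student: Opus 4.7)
The plan is induction on $|S|$. First, the case $1 \notin S$ is immediate by a dimension argument: all factors $X_{S,i}, X_{T_2}, \ldots, X_{T_{n-3}}$ are pullbacks from $\Mbar_{0, [n] \setminus \{1\}}$ via the forgetful map, but this space has dimension $n - 4 < n - 3$, so any degree-$(n-3)$ class pulled back from it is zero. For the base case $|S| = 4$ with $1 \in S$, every divisor is a cross-ratio class; ordering them as $X_{T_2}, X_{T_3}, \ldots, X_{T_{n-3}}, X_S$ yields partial unions $\{2, \ldots, \ell+1, n-2, n-1, n\}$ of size $\ell + 3$ (each $T_j$ contributes the single new element $j$, and $X_S$ finally adds $1$), so Lemma~\ref{lem:crossratio1} gives the integral $1 = |S| - 3$.

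For the inductive step $|S| \geq 5$ with $1 \in S$, I would apply Theorem~\ref{thm:recursion}. If $i = 1$, choose any $a \in S \setminus \{1\}$; by induction the leading term equals $|S| - 4$. If $i \neq 1$, choose $a = 1$; then the leading term vanishes by the $1 \notin S$ case. Both sub-cases reduce to showing the boundary sum from Theorem~\ref{thm:recursion} equals $1$.

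The core technical step is the boundary analysis. Letting $b = a$ (first sub-case) or $b = i$ (second), so that $b \in S \setminus \{1\}$ in either case, the sum runs over partitions $P \sqcup Q = [n]$ with $\{1, b\} \subseteq P$ and $S \setminus \{1, b\} \subseteq Q$. The crucial observation is that if any $T_j$ is split $2$--$2$ across $(P, Q)$, then the relevant $\operatorname{res}_P(T_j, i_j)$ or $\operatorname{res}_Q(T_j, i_j)$ becomes a $\psi$-class on $\Mbar_{0, 3}$, hence zero. Combined with the requirement that nonzero integrals on $\Mbar_{0, P \cup \star}$ and $\Mbar_{0, Q \cup \star}$ must match the respective dimensions, this should single out $P = \{1, b\}$, $Q = [n] \setminus \{1, b\}$ as the unique contributing partition: on it, $\Mbar_{0, P \cup \star}$ is a point (so the $P$-factor is $1$), and the $Q$-factor is an integral of $n - 4$ size-$4$ cross-ratio classes on $\Mbar_{0, Q \cup \star}$ that evaluates to $1$ by another application of Lemma~\ref{lem:crossratio1}. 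The main obstacle I anticipate is the careful bookkeeping needed to rule out all other partitions; I expect this analysis to split into the subcases $b \in \{n-2, n-1, n\}$ versus $b \in \{2, \ldots, n-3\}$ and further by the value of $k = |\{n-2, n-1, n\} \cap P|$.
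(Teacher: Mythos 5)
Your proposal is correct and follows the same overall strategy as the paper: handle $1\notin S$ by the dimension argument, get the base case from Lemma~\ref{lem:crossratio1} via the chain ordering $T_2,\dots,T_{n-3},S$, and then apply Theorem~\ref{thm:recursion} so that everything reduces to showing that $P=\{1,b\}$ is the unique contributing partition, whose term is $1$ by another application of Lemma~\ref{lem:crossratio1} on the $Q$-side. The one place where you diverge is the mechanism for excluding partitions with $|P|\geq 3$. The paper does this in one stroke: since $1\notin T_j$ for all $j\geq 2$, every $\operatorname{res}_P(T_j,j)$ is pulled back from $\Mbar_{0,(P\cup\star)\setminus 1}$, which has dimension $|P|-3$, so the $P$-side integral of the requisite $|P|-2$ nontrivial factors vanishes whenever $|P|\geq 3$. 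Your route — combining the vanishing of $\operatorname{res}$ on $2$--$2$ splits with degree/dimension matching, case-split on $k=|\{n-2,n-1,n\}\cap P|$ — does go through: $k=0$ and $k=3$ are killed by the dimension count (the latter because $|Q|\geq |S|-2\geq 3$ when $|S|\geq 5$), and $k=1,2$ are killed by $2$--$2$ splits. But note that your two filters genuinely need $|S|\geq 5$: when $|S|=4$ the partition $Q=S\setminus\{1,b\}$, $P=[n]\setminus Q$ (with $\{n-2,n-1,n\}\subseteq P$) passes both the dimension check and the no-$2$--$2$-split check, and is only zero because of the paper's ``label $1$ is absent'' observation. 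Since you dispose of $|S|=4$ separately via Lemma~\ref{lem:crossratio1}, this is not a gap in your argument, but it is worth knowing that the paper's single observation is both shorter and uniform in $|S|$.
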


\begin{proof}
Let us first recall the general observation that a Kapranov degree $\int_{\Mbar_{0,n}} X_{S_1,i_1}\dotsm X_{S_{n-3},i_{n-3}}$ is zero when $\bigcup_{i=1}^{n-3} S_i \neq [n]$ because in that case $X_{S_1,i_1}\dotsm X_{S_{n-3},i_{n-3}}$ is the pullback of a product of $n-3$ divisors on $\Mbar_{0,\bigcup_i S_i}$, which has dimension at most $n-4$.

If $1\notin S$, this immediately implies that the Kapranov degree in our proposition is zero.
For the remaining cases, we first note that, when we apply Theorem~\ref{thm:recursion} with $a \in S\setminus i$, the term
\[
\bigg(\int_{\Mbar_{0,P\cup \star}}\prod_{j=2}^{n-3} \operatorname{res}_P(T_j,j) \bigg)\cdot \bigg( \int_{\Mbar_{0,Q\cup \star}}\prod_{j=2}^{n-3} \operatorname{res}_Q(T_j,j)\bigg)
\]
in the summation, for a partition $[n]= P \sqcup Q$ with $P\ni 1$, is nonzero only if $P = \{1,b\}$ for some $b\neq 1$ since $1\notin T_j$ for all $j>1$.
When $P = \{1,b\}$ for some $b\neq 1$, we find that the term evaluates to 1 as follows: $\int_{\Mbar_{0,P\cup \star}}\prod_{j=2}^{n-3} \operatorname{res}_P(T_j,j) = 1$ because $\operatorname{res}_P(T_j,j) = 1$ for all $j>1$ and $\Mbar_{0,P\cup \star}$ is a point, while  $\int_{\Mbar_{0,Q\cup \star}}\prod_{j=2}^{n-3} \operatorname{res}_Q(T_j,j) = 1$ by Lemma~\ref{lem:crossratio1} because $\operatorname{res}_Q(T_j,j) = X_{\{j,\star,n-1,n\}}$ if $b = n-2$ (and likewise if $b = n-1$ or $n$), and otherwise $\operatorname{res}_Q(T_j,j) = X_{\{j,n-2,n-1,n\}}$ for $j\neq b$ and $X_{\{\star,n-2,n-1,n\}}$ for $j=b$.

Now, if $1\in S\setminus i$, then applying Theorem~\ref{thm:recursion} with $a = 1$, we find $\int_{\Mbar_{0,n}} X_{S\setminus 1, i} X_{T_2} \dotsm X_{T_{n-3}} = 0$ by the general observation, and all the terms in the summation over the appropriate partitions $[n] = P \sqcup Q$ vanish except when $P = \{1,i\}$, which contributes 1.

If $i = 1$, we induct on $|S|$.  When $|S| = 4$, the result follows from Lemma~\ref{lem:crossratio1}.  If $|S| > 4$, when we apply Theorem~\ref{thm:recursion} with $a \in S \setminus \{1, n-2, n-1, n\}$, the only nonzero term in the summation over partitions is when $P = \{1, a\}$, which contributes 1. Now the desired result follows from the induction hypothesis on the remaining term $\int_{\Mbar_{0,n}} X_{S\setminus a, 1} X_{T_2} \dotsm X_{T_{n-3}}$.
\end{proof}

\begin{proof}[Proof of Theorem~\ref{thm:upperbound}]
Without loss of generality, let $\{p,q,r\} = \{n-2,n-1,n\}$.
Let $f \colon \Mbar_{0,n} \to (\mathbb{P}^1)^{n-3}$ be the birational map given by taking the cross-ratio of the points $\{j, n-2, n-1, n\}$ for $j \in \{1, \dotsc, n-3\}$. Let $y_1, \dotsc, y_{n-3}$ be the basis for $H^2((\mathbb{P}^1)^{n-3})$ such that $f^* y_j = X_{T_{j}}$.
Note that under the Poincar\'e pairing on the cohomology ring of $(\PP^1)^{n-3}$, the monomials $\frac{y_1 \dotsm y_{n-3}}{y_1}, \dotsc, \frac{y_1 \dotsm y_{n-3}}{y_{n-3}}$ form the dual basis. As $f$ is birational, the projection formula states that
$$\int_{\overline{M}_{0,n}} X_{S,i} \cdot \frac{X_{\{1,n-2,n-1,n\}} \dotsb X_{\{n-3,n-2,n-1,n\}}}{X_{\{i,n-2,n-1,n\}}} = \int_{(\mathbb{P}^1)^{n-3}} f_*X_{S,i} \cdot \frac{y_1 \dotsm y_{n-3}}{y_{i}}.$$
Then Poincar\'e duality on $(\PP^1)^{n-3}$ and Proposition~\ref{prop:pushforward} give that
\[
f_*X_{S, i} = \delta_{S,i} + \sum_{\substack{a\in S\setminus i \\ a\in [n-3]}} y_a \quad\text{where}\quad \delta_{S,i} = \begin{cases}
(|S|-3)y_i & \text{if $i \in [n-3]$}\\
0 & \text{if $i\in \{n-2,n-1,n\}$}.
\end{cases}
\]

Now, let $D_1, \dotsc, D_{n-3}$ be a choice of divisors as in Observation~\ref{obs:divisors}.
As $f$ is an isomorphism on ${M}_{0,n}$, we let $D_j'$ be the closure of $D_j \cap {M}_{0, n}$ in $(\mathbb{P}^1)^{n-3}$, which as a divisor class is equivalent to $f_* X_{S_j,i_j}$.
The intersection of the $D_j'$ contains the intersection of the $D_j$, but it may contain extra points in the boundary.
By \cite[Example 12.2.7(a)]{FultonIntersectionTheory}, we have that
\begin{equation*}
\begin{split}
\int_{\Mbar_{0, n}} X_{S_1, i_1} \dotsb X_{S_{n-3}, i_{n-3}} &\le \int_{(\mathbb{P}^1)^{n-3}} D_1' \dotsb D_{n-3}' \\ 
& = \int_{(\mathbb{P}^1)^{n-3}} \prod_{j=1}^{n-3} \left(\delta_{S_j,i_j} + \sum_{\substack{a \in S_j \setminus i_j\\ a \in [n-3]}} y_a \right)\\ 
&= \sum_{m \text{ a matching}} w(m),
\end{split}
\end{equation*}
where in the last step we use that 
\begin{equation*}
\int_{(\mathbb{P}^1)^{n-3}}y_{a_1} \dots y_{a_{n-3}} = \begin{cases} 1 & \{a_1, \dotsc, a_{n-3}\} = \{1, \dotsc, n-1\} \\ 0 & \text{otherwise}.\end{cases} \qedhere
\end{equation*}
\end{proof}

The upper bound in Theorem~\ref{thm:upperbound} and the upper bound obtained by using Proposition~\ref{prop:easy}(3) to reduce to intersections of $\psi$-classes are not comparable.
For instance, in the example featured in Figure~\ref{fig:trees}, the upper bound in Theorem~\ref{thm:upperbound} is tight.
The upper bound in Theorem~\ref{thm:upperbound} may not be tight for any choice of $\{p,q,r\}$ even in the case of cross-ratio degrees \cite[Section 4]{Silversmith}. By Theorem~\ref{thm:cerberus} and Hall's marriage theorem, the upper bound is tight for some choice of $\{p, q, r\}$ whenever the Kapranov degree vanishes. 

\begin{remark}
Using a similar idea, one can show that any cross-ratio degree on $\overline{M}_{0,n}$ is bounded by $2^{n-7}$ for $n \ge 13$. If $n \ge 13$, then in any collection $S_1, \dotsc, S_{n-3} \subseteq [n]$, each with $|S_i| = 4$, some element $i$ must occur in at least $4$ of the sets because the average number of times each element occurs is strictly greater than $3$. Let $f \colon \overline{M}_{0,n} \to \mathbb{P}^{n-3}$ be the Kapranov map associated to $i$. Then $f_*X_{S_j} = c_1(\mathcal{O}(1))$ if $i \in S_j$, and $f_*X_{S_j} = c_1(\mathcal{O}(2))$ if $i \not \in S_j$.  B\'{e}zout's theorem and \cite[Example 12.2.7(a)]{FultonIntersectionTheory} implies the bound. 
\end{remark}

\section{$3$-transversals}

In this section, we prove Theorem~\ref{thm:3transversal}.
Our strategy is to apply Theorem~\ref{thm:recursion} repeatedly along with Lemma~\ref{lem:dragonthree}, which involves the following close cousin of the Cerberus condition:
For subsets $S_1, \dotsc, S_{\ell}$ of a finite set $T$, we say that they satisfy the \emph{dragon marriage condition} if
\begin{equation}\label{eq:dragon-marriage}\tag{dragon marriage}
\ell  = |T|-1 \quad\text{and}\quad | \textstyle \bigcup_{i \in U} S_i| \ge |U| + 1 \quad\text{for all nonempty subsets $ U \subseteq T$}.
\end{equation}
Note that subsets $S_1, \dotsc, S_{n-3}$ of $[n-2]$ satisfy the dragon marriage condition if and only if $S_1 \cup \{n-1, n\}, \dotsc, S_{n-3} \cup \{n-1, n\}$ satisfies the Cerberus condition.

\begin{proposition}\label{prop:DM}
\
\begin{enumerate}
\item  Subsets $S_1, \dotsc, S_{\ell}$ of $T$ satisfy the dragon marriage condition if and only if, for any choice of $i \in T$, there is a bijection $m \colon [|T|-1] \to T \setminus i$ such that $m(j) \in S_j$ for all $j \in [|T|-1]$.
\item  For subsets $S_1, \dotsc, S_{n-3}$ of $[n-2]$, the Kapranov degree
\[
\int_{\Mbar_{0,n}} X_{S_1 \cup \{n-1,n\}, n} \dotsm, X_{S_{n-3}\cup \{n-1,n\},n}
\]
is 1 if they satisfy the dragon marriage condition and 0 otherwise.
\end{enumerate}
\end{proposition}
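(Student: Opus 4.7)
The plan is to treat the two parts separately; both reduce to previously established tools.

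Part~(1) is a refinement of Hall's marriage theorem. For the forward direction, given the dragon marriage condition and any $i \in T$, I would apply Hall's marriage theorem to the subsets $S_j \setminus \{i\}$ of $T \setminus i$; the Hall condition follows from the inequality built into dragon marriage, since $|\bigcup_{j \in U}(S_j \setminus i)| \geq |\bigcup_{j \in U} S_j| - 1 \geq |U|$, and the cardinalities $|[\ell]| = |T \setminus i| = |T|-1$ match. For the converse, the existence of any bijection already forces $\ell = |T|-1$. If dragon marriage failed via a nonempty $U \subseteq [\ell]$ with $|\bigcup_{j \in U} S_j| \leq |U|$, then either ordinary Hall already fails (so no bijection to any target exists), or $|\bigcup_{j \in U} S_j| = |U|$ exactly, in which case picking $i$ from inside $\bigcup_{j \in U} S_j$ forces any bijection $m$ with $m(j) \in S_j$ to satisfy $m(U) = \bigcup_{j \in U} S_j \ni i$, contradicting $m([\ell]) \subseteq T \setminus i$.

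For Part~(2), I would combine the positivity theorem with monotonicity and Witten's formula to pin down the integer from both sides. If the dragon marriage condition fails, then $|\bigcup_{j \in U}(S_j \cup \{n-1,n\})| = |\bigcup_{j \in U} S_j| + 2 < |U|+3$ for some nonempty $U$, so the Cerberus condition fails for the enlarged sets $S_j \cup \{n-1,n\}$ and Theorem~\ref{thm:properties}(b) yields a Kapranov degree of $0$. If dragon marriage holds, the Cerberus condition holds for the enlarged sets and Theorem~\ref{thm:properties}(b) yields a strictly positive Kapranov degree. On the other hand, the monotonicity of Proposition~\ref{prop:easy}(3), applied by enlarging each $S_j \cup \{n-1,n\}$ to all of $[n]$, gives
\[
\int_{\Mbar_{0,n}} X_{S_1 \cup \{n-1,n\}, n} \cdots X_{S_{n-3} \cup \{n-1,n\}, n} \;\leq\; \int_{\Mbar_{0,n}} \psi_n^{n-3} \;=\; 1,
\]
where the last equality is Witten's formula (the first bullet of the introduction). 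A positive integer that is at most $1$ must equal $1$.

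I do not anticipate a serious obstacle. The only subtle point is the converse of Part~(1), where one must exploit the freedom to choose $i$ inside the offending union $\bigcup_{j \in U} S_j$ in order to derive the contradiction; apart from that, both parts are direct consequences of Hall's theorem and results already established in the paper.
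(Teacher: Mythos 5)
Your proposal is correct and follows essentially the same route as the paper: part (2) is verbatim the paper's argument (monotonicity up to $\int_{\Mbar_{0,n}}\psi_n^{n-3}=1$ combined with the positivity criterion of Theorem~\ref{thm:properties}(b)), and part (1) is the Hall-type argument that the paper delegates to a citation of Postnikov, noting it follows from the Hall--Rado theorem. Your explicit handling of the converse by choosing $i$ inside a tight union $\bigcup_{j\in U}S_j$ is exactly the right refinement and fills in what the paper leaves to the reference.
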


\begin{proof}
Statement (1) is \cite[Proposition 5.4]{Pos09}, which can be deduced from the Hall-Rado theorem (Proposition~\ref{prop:hallrado}).  For statement (2), Proposition~\ref{prop:easy}(3) implies that the Kapranov degree is at most $\int_{\Mbar_{0,n}} X_{[n],n}^{n-3} = \int_{\Mbar_{0,n}} \psi_n^{n-3} = 1$, so Theorem~\ref{thm:cerberus} implies the statement.
Alternatively, the statement (2) can be interpreted as \cite[Theorem 5.1]{Pos09}; see Remark~\ref{rem:simplices}.
\end{proof}

We now proceed to prove Theorem~\ref{thm:3transversal}.  Recall that for a pair $(T,j)$ consisting of $T\subseteq [n-3]$ and $j\in \{n-2, n-1, n\}$, we denoted $Y_{T,j} = X_{T \cup \{n-2,n-1,n\}, j}$.
Let $\mathcal T = \{(T_1, j_1), \dotsc, (T_{n-3},j_{n-3})\}$ be a collection of such pairs.

\begin{lemma}
\label{lem:dragonthree}
For a partition $[n] = P \sqcup Q$ with $|P|\geq 2$, $n-2 \in P$, and $\{n-1, n\} \subseteq Q$,
define a collection $\mathcal T|_P$ of subsets of $P\setminus \{n-2\}$, and a collection $\mathcal T|_Q$ of pairs $(T,j)$ where $T\subseteq Q\setminus \{n-1,n\}$ and $j\in \{n-2,n-1,n\}$, by
\begin{align*}
\mathcal T|_P &= \{T_i \cap P : j_i = n-2 \text{ and } T_i \cap P \neq \emptyset\} \quad\text{and}\\
\mathcal T|_Q &= \{(T_i\cap Q, j_i) :  j_i = n-2 \text{ and } T_i \cap P = \emptyset\} \cup \{(T_i \cap Q, j_i) : j_i\in \{n-1, n\}\}.
\end{align*}
Then, we have
\[
\int_{D_{P| Q}} \operatorname{gl}_{P|Q}^* Y_{T_1, j_1} \dotsb \operatorname{gl}_{P|Q}^* Y_{T_{n-3}, j_{n-3}} = \int_{\Mbar_{0,Q\cup \{n-2\}}} \prod_{(T,j) \in \mathcal T|_Q} Y_{T, j} 
\]
if the collection $\mathcal T|_P$ of subsets of $P\setminus \{n-2\}$ satisfies the dragon marriage condition, and 0 otherwise.
\end{lemma}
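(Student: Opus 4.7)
The plan is to pull back through the gluing map $\operatorname{gl}_{P|Q}$, split the resulting product over the two factors of $\Mbar_{0, P \cup \star} \times \Mbar_{0, Q \cup \star}$ by the K\"{u}nneth formula, and recognize the $P$-factor as an instance of Proposition~\ref{prop:DM}(2).

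Write $S_i = T_i \cup \{n-2, n-1, n\}$, so that $Y_{T_i, j_i} = X_{S_i, j_i}$.  Because $n-2 \in P$ and $\{n-1, n\} \subseteq Q$, one always has $n-2 \in S_i \cap P$ and $\{n-1, n\} \subseteq S_i \cap Q$.  First I would apply Lemma~\ref{lem:glueing} to each $\operatorname{gl}_{P|Q}^* Y_{T_i, j_i}$: the three cases of that lemma (and their $P \leftrightarrow Q$ symmetrics) cleanly match the three types of pair $(T_i, j_i)$ and yield
\begin{align*}
\operatorname{gl}_{P|Q}^* Y_{T_i, j_i} &= \pi_P^* X_{(T_i \cap P) \cup \{n-2, \star\}, n-2} && \text{if $j_i = n-2$ and $T_i \cap P \neq \emptyset$,}\\
\operatorname{gl}_{P|Q}^* Y_{T_i, j_i} &= \pi_Q^* X_{T_i \cup \{n-1, n, \star\}, \star} && \text{if $j_i = n-2$ and $T_i \cap P = \emptyset$,}\\
\operatorname{gl}_{P|Q}^* Y_{T_i, j_i} &= \pi_Q^* X_{(T_i \cap Q) \cup \{n-1, n, \star\}, j_i} && \text{if $j_i \in \{n-1, n\}$.}
\end{align*}

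Next I would apply K\"{u}nneth (exactly as in the proof of Theorem~\ref{thm:recursion}) to factor the left-hand side as the product
\[
\left(\int_{\Mbar_{0, P\cup \star}} \prod_{A \in \mathcal T|_P} X_{A \cup \{n-2, \star\}, n-2}\right) \cdot \left(\int_{\Mbar_{0, Q\cup \{n-2\}}} \prod_{(T,j)\in \mathcal T|_Q} Y_{T,j}\right),
\]
where on the $Q$-factor I identify $\star$ with $n-2$ so as to land in $\Mbar_{0, Q \cup \{n-2\}}$.  Under this relabeling the second and third cases above both become $Y_{T_i \cap Q, j_i}$, which gives exactly the indexing by $\mathcal T|_Q$, while the first case contributes to the $P$-factor indexed by $\mathcal T|_P$.

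Finally, I would evaluate the $P$-factor by invoking Proposition~\ref{prop:DM}(2) on $\Mbar_{0, P \cup \star}$: after relabeling the marked points of $P \cup \star$ so that the distinguished pair $\{n-2, \star\}$ plays the role of $\{n-1, n\}$, the classes $X_{A \cup \{n-2, \star\}, n-2}$ become precisely the classes appearing in Proposition~\ref{prop:DM}(2), and the subsets $A = T_i \cap P$ indexing $\mathcal T|_P$ are subsets of $P \setminus (n-2)$.  The proposition then says the $P$-factor equals $1$ if $\mathcal T|_P$ satisfies the dragon marriage condition on $P \setminus (n-2)$, and $0$ otherwise; plugging this in gives the claim.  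The main subtlety is the case analysis in the K\"{u}nneth decomposition: one must verify that Lemma~\ref{lem:glueing} really partitions the pullbacks cleanly into $\pi_P$- and $\pi_Q$-pieces with the combinatorics matching the definitions of $\mathcal T|_P$ and $\mathcal T|_Q$.  Once that bookkeeping is done, the rest is formal.
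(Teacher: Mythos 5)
Your proposal is correct and follows essentially the same route as the paper: apply Lemma~\ref{lem:glueing} to sort each pullback into a $\pi_P^*$- or $\pi_Q^*$-class according to the three cases defining $\mathcal T|_P$ and $\mathcal T|_Q$, factor the integral via the K\"unneth formula, and evaluate the $P$-factor by Proposition~\ref{prop:DM}(2) after relabeling $\{n-2,\star\}$ as the two heavy points. The only difference is that you spell out the relabeling step slightly more explicitly than the paper does.
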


\begin{proof}
By Lemma~\ref{lem:glueing}, we have that
$$\operatorname{gl}_{P|Q}^* Y_{T_i, j_i} = \begin{cases}
\pi_P^* X_{T_i \cap P \cup \{n-2, \star\}, n-2} & \text{if } j_i = n-2 \text{ and } T_i \cap P \not= \emptyset \\
\pi_Q^* X_{T_i \cup \{n-1,n,\star\}, \star} & \text{if }  j_i = n-2 \text{ and } T_i \cap P = \emptyset \\
\pi_Q^* X_{T_i \cap Q \cup \{n-1, n,\star\}, j_i} & \text{otherwise, i.e.\ } j_i \in \{n-1, n\},\end{cases}$$
so that the intersection number is the product
\[
\Big( \int_{\Mbar_{0, P\cup \star}} \prod_{T\in \mathcal T|_P} X_{T\cup \{n-2, \star\}, n-2} \Big) \cdot \Big(\int_{\Mbar_{0,Q\cup \{n-2\}}} \prod_{(T,j) \in \mathcal T|_Q} Y_{T, j}  \Big),
\]
where for the second factor about $\Mbar_{0,Q\cup \{n-2\}}$, we relabeled $\star$ by $n-2$.
The lemma now follows from Proposition~\ref{prop:DM}(2).
\end{proof}

\begin{proof}[Proof of Theorem~\ref{thm:3transversal}]
First, we note that the theorem holds when $T_i = [n-3]$ for all $i\in [n-3]$.
In that case, it is straightforward to verify that the number of 3-transversals is the multinomial $\binom{n-3}{d_{n-2}, d_{n-1}, d_n}$, where $d_k$ denotes the number of $i\in [n-3]$ such that $j_i = k$.  On the other hand, by \cite{Witten} (as stated in \ref{ref:witten}), we have $\int_{\Mbar_{0,n}} \psi_{n-2}^{d_{n-2}}\psi_{n-1}^{d_{n-1}}\psi_{n}^{d_{n}} = \binom{n-3}{d_{n-2}, d_{n-1}, d_n}$.

We may assume that $j_1 = n-2$. Inducting on $n$, we show that 
$$\int_{\Mbar_{0,n}} Y_{T_1 \cup a, n-2} \dotsb Y_{T_{n-3}, j_{n-3}} - \int_{\Mbar_{0,n}} Y_{T_1, n-2} \dotsb Y_{T_{n-3}, j_{n-3}} \le |\{\text{new 3-transversals}\}|,$$
where a $3$-transversal $t$ of $(T_1 \cup a, n-2), \dotsc, (T_{n-3}, j_{n-3})$ is ``new'' if it is not a $3$-transversal of $(T_1, n-2), \dotsc, (T_{n-3}, j_{n-3})$. This implies the theorem, because if this inequality were ever strict then we would have 
$$\int_{\Mbar_{0,n}} Y_{U_1, n-2} \dotsb Y_{U_{n-3}, j_{n-3}} < |\{\text{3-transversals of }(U_1, n-2), \dotsc, (U_{n-3}, j_{n-3})\}|$$
for all $U_1 \supseteq T_1 \cup a$, $U_2 \supseteq T_2, \dotsc, U_{n-3} \supseteq T_{n-3}$.
This contradicts the fact that we have equality when $U_i = [n-3]$ for all $i$. 

Applying Theorem~\ref{thm:recursion}
and then Lemma~\ref{lem:dragonthree} with $\mathcal T = \{(T_2,j_2), \dotsc, (T_{n-3},j_{n-3})\}$, the induction hypothesis implies that
\[
\int_{\Mbar_{0,n}} Y_{T_1 \cup a, n-2} \dotsb Y_{T_{n-3}, j_{n-3}} - \int_{\Mbar_{0,n}} Y_{T_1, n-2} \dotsb Y_{T_{n-3}, j_{n-3}} = \sum_{P|Q} |\{\text{3-transversals of $\mathcal T|_Q$}\}|
\]
where the sum is over all partitions $[n] = P\sqcup Q$ with $\{a, n-2\} \subseteq P$ and $T_1 \cup \{n-1, n\} \subseteq Q$ such that the subsets of $P \setminus (n-2)$ in $\mathcal T|_P$ satisfy the dragon marriage condition.
Given such a partition $[n] = P \sqcup Q$ and a $3$-transversal $t$ of $\mathcal T|_Q$, extend $t$ to a $3$-transversal $\tilde{t}$ of $(T_1 \cup a, n-2), \dotsc, (T_{n-3}, j_{n-3})$ by setting $\tilde{t}(i) = n-2$ if $i \in P \setminus (n-2)$, and $\tilde{t}(i) = t(i)$ if $i \in Q \setminus \{n -1, n\}$. This is indeed a $3$-transversal:
if $m$ is a bijection $\{i : (T_i,j_i) \in \mathcal T|_Q\} \to Q\setminus \{n-1,n\}$ that witnesses the 3-transversal $t$, then we can extend $m$ to $\tilde{m} \colon [n-3] \to [n-3]$ by setting $\tilde{m}(1) = a$ and using Proposition~\ref{prop:DM}(1) to extend $\tilde m$ to the remaining indices $\{i : T_i \in \mathcal T|_P\}$.

In order to prove the claimed bound, we need to check that the $3$-transversal obtained is new, and that no such new $3$-transversal can be obtained from two different partitions $[n] = P\sqcup Q$.
For verifying this, we note the following observation.

\noindent
\textbf{Observation}: Any bijection $m \colon \{i : (T_i,j_i) \in \mathcal T|_Q\} \to Q\setminus \{n-1,n\}$ witnessing the 3-transversal $t$ of $\mathcal T|_Q$ must restrict to a bijection $\{i: j_i = n-2 \text{ and } T_i \cap P = \emptyset\} \to \{b \in Q\setminus \{n-1,n\} : {t}(b) = n-2\}$, 
since the other indices $i' \in \{i: (T_i,j_i)\in \mathcal T|_Q\} \setminus \{i: j_i = n-2 \text{ and } T_i \cap P = \emptyset\}$ have $j_{i'} \in \{n-1, n\}$.
Here, note that $i \geq 2$ throughout.

Suppose now that $\tilde{t}$ was not a new $3$-transversal, which would imply that there is a bijection $\tilde{m}'$ realizing $\tilde{t}$ with $\tilde{m}'(1) \neq a$.
Then, we have $\tilde{m}'(1) \in T_1 \subseteq Q\setminus \{n-1, n\}$ and $\tilde{t}(\tilde{m}'(1)) = n-2$.
But this contradicts the observation that $\tilde{m}'$ must restrict to a bijection $\{i : j_i = n-2 \text{ and } T_i \cap P = \emptyset\} \to \{b \in Q\setminus \{n-1,n\} : {t}(b) = n-2\}$.

Lastly, suppose now that there is some other partition $[n] = P' \sqcup Q'$ which also gives rise to $\tilde{t}$ via the above construction. Then, by the observation, a bijection $\tilde{m}$ realizing $\tilde{t}$ must restrict to give a bijection $\{i : j_i = n-2 \text{ and } T_i \cap P' = \emptyset\} \to \{b \in Q' \setminus \{n-1, n\}: \tilde{t}(b) = n-2\}$.
As $Q' \not= Q$, we thus have $\{i : j_i = n-2 \text{ and } T_i \cap P' = \emptyset\}\not= \{i : j_i = n-2 \text{ and } T_i \cap P = \emptyset\}$.
Hence, by switching $(P|Q)$ with $(P'|Q')$ if necessary, we may assume that
$$W = \{i : j_i = n-2, \enspace T_i \cap P = \emptyset, \text{ and } T_i \cap P' \not= \emptyset\}$$
is nonempty.
A bijection $\tilde{m}$ realizing $\tilde{t}$ then must restrict to a bijection
\[
W \to \{b \in Q\setminus \{n-1, n\} : \tilde{t}(b) = n-2 \text{ and } b \in P'\} =  \textstyle \bigcup_{i \in W} T_i \cap P'.
\]
In particular, we have $|\bigcup_{i \in W} T_i \cap P'| = |W|$, which contradicts that the subsets of $P'\setminus (n-2)$ in $\mathcal T|_{P'}$ must satisfy the dragon marriage condition.
\end{proof}

\begin{remark}
\label{rem:simplices}
For a subset $T\subseteq [n-2]$, let $\Delta_T$ be the simplex $\operatorname{conv}\{\be_i: i\in T\} \subset \RR^{n-2}$, and let $\nabla_T = \operatorname{conv}\{-\be_i : i\in T\}$.
Denote $\tilde{T} = T\cup \{n-1, n\}$.
When $\{n-1, n\} \subset S_j$ and $i_j \in \{n-1, n\}$ for all $j$, the Kapranov degree is the mixed volume of these simplices, i.e.,
$$\int_{\Mbar_{0,n}} X_{\tilde{T}_1, n-1} \dotsb X_{\tilde{T}_k, n-1} X_{\tilde{T}_{k+1}, n} \dotsb X_{\tilde{T}_{n-3}, n} = \text{mixed volume of }(\Delta_{T_1}, \dotsc \Delta_{T_k}, \nabla_{T_{k+1}}, \dotsc, \nabla_{T_{n-3}}).$$
To see this, let $LM_n$ be the Losev--Manin space \cite{LosevManin}, i.e., the moduli space of weighted stable rational curves \cite{Hassett} with $n$ points marked by $[n]$ with weights $(\epsilon, \dotsc, \epsilon, 1, 1)$ for small $\epsilon >0$.
It is isomorphic to the permutohedral toric variety; under the standard correspondence between polytopes and base-point-free divisors on toric varieties, the simplices $\Delta_T$ and $\nabla_T$ correspond to divisors on $LM_n$ which pullback to $X_{\tilde{T}, n-1}$ and $X_{\tilde{T}, n}$, respectively, under the birational map $\Mbar_{0,n} \to LM_n$.
See \cite{RossPsiClass} for details.

When $k = 0$ or $n-3$, these mixed volumes were computed in \cite[Theorem 5.1]{Pos09}, and when $n-2 \in T_j$ for all $j$, they were computed in \cite[Theorem B]{EFLS}, which is generalized by Theorem~\ref{thm:3transversal}. It can be deduced from \cites{HK12,BES} that when all of the simplices are of the form $\Delta_T$ for some $T$ or $\nabla_{[n-2]}$, then this mixed volume can be computed in terms of the characteristic polynomial of a certain cotransversal matroid. It then follows from \cite{CPV} that computing Kapranov degrees is \#P-hard. 
\end{remark}

\bibliographystyle{alpha}
\bibliography{matroid.bib}

\end{document}